\newcommand*{\textlabel}[2]{%
  \edef\@currentlabel{#1}
  \phantomsection
  #1\label{#2}
}
\newtheoremstyle{custom}
  {3pt}
  {3pt}
  {\slshape}
  {}
  {\bfseries}
  {.}
  { }
   {}
\theoremstyle{custom}
\newtheorem{theorem}{Theorem}[section]
\newtheorem{proposition}[theorem]{Proposition}
\newtheorem{proposition/definition}[theorem]{Proposition/Definition}
\theoremstyle{definition}
\theoremstyle{remark}
\newtheorem{remark}[theorem]{Remark}
\newtheoremstyle{exercise}
  {3pt}
  {6pt}
  {}
  {}
  {\bfseries}
  {:}
  { }
   {}
\theoremstyle{exercise}
\newtheorem{exercise}[theorem]{Exercise}
\newtheoremstyle{exercises}
  {3pt}
  {6pt}
  {}
  {}
  {\bfseries}
  {:}
  {\newline}
   {}
\theoremstyle{exercise}
\newtheorem{exercises}[theorem]{Exercises}
\def\boxit#1{\vbox{\hrule height1pt\hbox{\vrule width1pt\kern3pt
  \vbox{\kern3pt#1\kern3pt}\kern3pt\vrule width1pt}\hrule height1pt}}
\def\trank{\text{rank}}
\def\bv{\bold v}
\def\BC{\mathbb C}
\def\BP{\mathbb P}
\def\pp#1{\mathbb P^{#1}}
\def\pp#1{{\mathbb P}^{#1}}
\def\tdim{{\rm dim}}
\def\hd{,...,}
\def\11{\mathbf 1}
\def\l{\lambda}
\def\a{\alpha}
\def\o{\omega}
\def\b{\beta}
\def\g{\gamma}
\def\s{\sigma}
\def\ot{{\mathord{ \otimes } }}
\def\op{{\mathord{\,\oplus }\,}}
\def\ra{{\mathord{\;\rightarrow\;}}}
\def\op{\oplus}
\def\BZ{\Bbb Z}
\def\op{\oplus}
\def\s{\sigma}
\def\t{\tau}
\def\a{\alpha}
\def\b{\beta}
\def\g{\gamma}
\def\l{\lambda}
\def\FS{\mathfrak  S}
\def\BP{\mathbb  P}
\def\BC{\mathbb  C}
\def\pp#1{\mathbb  P^{#1}}
\def\hd{, \hdots ,}
\def\pp#1{\mathbb  P^{#1}}
\def\ur{\underline {\bold R}}
\def\ra{\rightarrow}
\def\tend{\operatorname{End}}
\def\tdim{\operatorname{dim}}
\def\tlim{\lim}
\def\trank{\operatorname{rank}}
\def\bbb{{\bold{b}}}
\def\be{\begin{equation}}
\def\ene{\end{equation}}
\def\aaa{{\bold {a}}}
\def\bbb{{\bold {b}}}
\def\ccc{{\bold {c}}}
\newcommand{\Id}{\operatorname{Id}}
\newcommand{\Tr}{\operatorname{Tr}}
\newcommand{\Z}{\mathbb{Z}}
\def\trank{{\mathrm {rank}}}
\def\aaa{{\bold a}}\def\bbb{{\bold b}}\def\ccc{{\bold c}}
\def\uuu{\bold u}
\newcommand{\GL}{\operatorname{GL}}
\def\BL{\Bbb L}
\def\bv{\bold v}\def\bw{\bold w}
\begin{document}

\title[On the geometry of border rank algorithms]{On the geometry of border rank algorithms for  
  $n\times 2$ by $2\times 2$ matrix multiplication}
\author{J.M. Landsberg and Nicholas Ryder}
\begin{abstract}We make an in-depth study of the known border rank (i.e. approximate)
algorithms for the   matrix multiplication
tensor $M_{\langle n,2,2\rangle}\in \BC^{2n}\ot \BC^4\ot \BC^{2n}$ encoding the 
multiplication of an  $n\times 2$ matrix by a  $2\times 2$ matrix. 
 \end{abstract}
\thanks{Landsberg partially  supported by   NSF grant  DMS-1405348.}
\email{jml@math.tamu.edu, nick.ryder@berkeley.edu}
\keywords{matrix multiplication, border rank, approximate algorithms, Segre variety}
\maketitle

\section{Introduction}
This is the first of a planned series of articles examining the geometry of algorithms for matrix multiplication tensors.
Geometry has been used effectively in proving lower bounds for the complexity of matrix multiplication (see, e.g. \cite{Strassen505,v011a011}), and one goal
of this series is to initiate the use of geometry in proving upper bounds via practical algorithms for small matrix
multiplication tensors. 

A guiding principle is that if a tensor has symmetry, then there should be optimal expressions for it that  reflect that symmetry.
The matrix multiplication tensors have extraordinary symmetry. In this paper we examine algorithms, more precisely
{\it border rank algorithms} (see below for the definition),  that were originally found via numerical methods and  computer searches.  

Here is a picture illustrating  the geometry of an algorithm due to Alekseev-Smirnov that we discuss in \S\ref{tbcrls3}:

\begin{figure}[!htb]\begin{center}\label{BCLRpic}
\includegraphics[width=9.6cm]{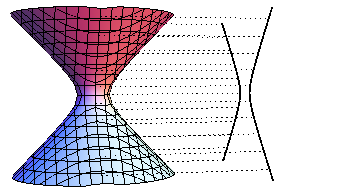} 
\caption{\small{A quadric surface, a plane conic curve and a one parameter family of lines connecting them.}}  
\end{center}
\end{figure}

\medskip

A tensor $T\in \BC^{\aaa}\ot \BC^{\bbb}\ot \BC^{\ccc}$ has {\it rank one} if there exist
$a\in \BC^{\aaa}$, $ b\in   \BC^{\bbb}$ and $c\in  \BC^{\ccc}$ such that
$T=a\ot b\ot c$. A {\it rank $r$ expression} for a tensor  $T\in \BC^{\aaa}\ot \BC^{\bbb}\ot \BC^{\ccc}$ is a collection of
 rank one tensors $T_1\hd T_r$ such that $T=T_1+\cdots + T_r$. 
A {\it border rank $r$ algorithm} for $T$ is an expression
$$
T=\tlim_{t\ra 0} T_1(t)+\cdots + T_r(t)
$$
with each $T_j(t)$ of rank one, and for $t>0$ the $T_j(t)$ are linearly independent. 
The first interesting border rank algorithm was found by Bini-Capovani-Lotti-Romani (BCLR)
\cite{MR534068}, essentially by accident:  After Strassen's remarkable discovery \cite{Strassen493}
of a rank seven expression for the $2\times 2$ matrix multiplication tensor, and
Winograd's proof shortly afterward \cite{win553} that no rank six expression existed, BCLR attempted to determine
if the rank of the $2\times 2$ matrix multiplication tensor where  
an entry of one of the matrices is zero could have an expression of rank less than six.
They used an alternating least squares iteration scheme on a computer.
Instead of finding such an expression, they found the border rank expression \eqref{bcrlexpr} below.
(That some tensors have border rank lower than rank was known to Terracini in 1911 \cite{Terracini1}, if not earlier, but not
to the computer science community.)
Later Smirnov \cite{MR3343116,MR3146566} and Alekseev-Smirnov \cite{AlSmir}, using similar,  but
more sophisticated methods, found further border rank
algorithms for small matrix multiplication tensors. 

In this paper we describe  
geometry in these algorithms, with   very satisfactory answers in first few cases  and
successively weaker results as the tensors get larger.
We begin, in \S\ref{mmultbcrl} with a review of the matrix multiplication and BCLR-type tensors. 
We discuss the known upper and lower bounds on their border ranks in \S\ref{brbnds}.
In \S\ref{brgeom} and \S\ref{seggeom} we respectively discuss the geometry of border rank
algorithms and the Segre variety. In sections \S\ref{tbcrl}--\ref{m333} we analyze the various algorithms.
We conclude with  a brief discussion of the uniquess of
the BCLR algorithms  in \S\ref{uniquerems}.

\subsection*{Notation} We let $A,B,C,U,V,W$ denote  complex vector spaces of dimensions $\aaa,\bbb,\ccc,\uuu,\bv,\bw$.
 If $W$ is a vector space then
$\BP W$ is the associated projective space of lines through the origin:
$\BP W=(W\backslash 0)/\sim$ where $w_1\sim w_2$ if $w_1=\l w_2$ for some 
nonzero complex number $\l $. Write $[w]\in \BP W$ for
the equivalence class of $w\in W\backslash  0$ and if $X\subset \BP W$, let $\hat X\subset W$ denote
the corresponding cone in $W$.  The linear span of vectors $w_1\hd w_s$ is denoted
$\langle w_1\hd w_s\rangle$, and the span of $[w_j]\in \BP W$ is similarly denoted
$\langle [w_1]\hd [w_s]\rangle\subset \BP W$.

The set of rank one tensors in $\BP (A\ot B\ot C)$ is
isomorphic to $\BP A\times \BP B\times \BP C$, and the
inclusion into $\BP (A\ot B\ot C)$
is denoted $Seg(\BP A\times \BP B\times \BP C)\subset\BP (A\ot B\ot C)$
and called the {\it Segre variety}.

\subsection*{Acknowledgments}
We thank F. Gesmundo for calculating the Lie algebra of the 
stabilizer of $T_{BCLR}$ and the limiting $5$ and $10$-planes in the BCLR algorithms.
This paper is the result of a project associated to   a course at UC Berkeley fall 2015 given  by 
the first author and attended by the second  as part
of  a semester long program
 {\it Algorithms and Complexity in Algebraic Geometry} at the 
 Simons Institute for the Theory of Computing. The authors thank the Institute
 for making this paper possible.

\section{Matrix multiplication and the BCLRS tensors}\label{mmultbcrl}
The matrix multiplication tensor is   
\be \label{mmultis}M_{\langle \uuu,\bv,\bw\rangle}=M_{\langle U,V,W\rangle}=\Id_U\ot \Id_V\ot \Id_W
\in (U^*\ot V)\ot (V^*\ot W)\ot (W^*\ot U)
\ene
see \cite[\S 2.5.2]{MR2865915}. 
We write 
$$M_{\langle \uuu,\bv,\bw\rangle}=\sum_{k=1}^{\bw}\sum_{j=1}^\bv\sum_{i=1}^\uuu x^i_j\ot y^j_k\ot z^k_i=\sum_{k=1}^{\bw}\sum_{j=1}^\bv\sum_{i=1}^\uuu
(u^i\ot v_j)\ot (v^j\ot w_k)\ot (w^k\ot u_i)
$$
where $\{ u_i\}$ is a basis of $U$ with dual basis $\{ u^i\}$ and similarly for $V,W$.

Define the generalized Bini-Capovani-Lotti-Romani-Smirnov tensor, corresponding
to $m\times 2$ by $2\times 2$ matrix multiplication with the $x^1_1$ entry set equal
to zero:
$$
T_{BCLRS,m}:=
M_{\langle m,2,2\rangle} - x^1_1\ot (y^1_1\ot z^1_1+y^1_2\ot z^2_1)\in \BC^{2m-1}\ot \BC^4\ot \BC^{2m}.
$$
(In the original tensor BCLR set the $x^2_1$ entry to zero. We set $x^1_1$
 equal to zero to facilitate comparisons between different values of $m$.)

\section{Border rank    bounds for the BCLRS   tensors}\label{brbnds}

The following observation  dates back to \cite{MR534068}:
\begin{proposition}\label{twoaddprop} If $\ur(T_{BCLRS,m})=r$ and
$\ur(T_{BCLRS,m'})=r'$, then setting $n=m+m'-1$,
$\ur(M_{\langle n,2,2\rangle})\leq r+r'$.
\end{proposition}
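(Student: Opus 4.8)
The plan is to exhibit $M_{\langle n,2,2\rangle}$ \emph{exactly} --- not merely as a limit --- as a sum
\[
M_{\langle n,2,2\rangle}=\phi_1(T_{BCLRS,m})+\phi_2(T_{BCLRS,m'}),
\]
where each $\phi_i$ has the form $f_i\ot g_i\ot h_i$ with $f_i,g_i,h_i$ linear maps between the corresponding $A$-, $B$-, $C$-spaces. Granting this, the proposition is immediate: if $T_{BCLRS,m}=\tlim_{t\ra 0}\sum_{p=1}^r T_p(t)$ with each $T_p(t)$ of rank one, then $\phi_1(T_{BCLRS,m})=\tlim_{t\ra 0}\sum_{p=1}^r \phi_1(T_p(t))$, and $f_1\ot g_1\ot h_1$ sends a rank one tensor to a tensor of rank $\le 1$; doing the same with $T_{BCLRS,m'}$ and $\phi_2$ and adding, we write $M_{\langle n,2,2\rangle}$ as a limit of sums of at most $r+r'$ rank one tensors, so $\ur(M_{\langle n,2,2\rangle})\le r+r'$.

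To build the $\phi_i$, split the row index of $M_{\langle n,2,2\rangle}=\sum_{i=1}^n\sum_{j,k}x^i_j\ot y^j_k\ot z^k_i$ into $\{1,\dots,m\}$ and $\{m+1,\dots,n\}$, noting $n-m=m'-1$. For $\phi_1$ I would take the obvious inclusions: the $2m-1$ coordinates of the first factor of $T_{BCLRS,m}$ map onto $\{\,x^i_j : i\le m,\ (i,j)\ne(1,1)\,\}$, the $B$-factor maps identically, and the $2m$ coordinates of the third factor map onto $\{\,z^k_i : i\le m\,\}$; this yields $\phi_1(T_{BCLRS,m})=\big(\sum_{i=1}^m\sum_{j,k}x^i_j\ot y^j_k\ot z^k_i\big)-x^1_1\ot(y^1_1\ot z^1_1+y^1_2\ot z^2_1)$. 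For $\phi_2$, the $m'-1$ ``full'' rows $2,\dots,m'$ of $T_{BCLRS,m'}$ go onto rows $m+1,\dots,n$ in the obvious way, while its distinguished first row --- the one carrying the hole --- is glued back onto row $1$, but \emph{with the two columns of the $2\times 2$ matrix interchanged} (equivalently, using the copy of the BCLRS tensor in which a different single $x$-entry has been set to zero). Taking the $B$-factor map of $\phi_2$ to be the corresponding transposition of $v_1,v_2$ and matching the $A$- and $C$-coordinates accordingly, one gets $\phi_2(T_{BCLRS,m'})=x^1_1\ot(y^1_1\ot z^1_1+y^1_2\ot z^2_1)+\sum_{i=m+1}^n\sum_{j,k}x^i_j\ot y^j_k\ot z^k_i$. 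Adding the two, the $\pm x^1_1\ot(y^1_1\ot z^1_1+y^1_2\ot z^2_1)$ terms cancel and $M_{\langle n,2,2\rangle}$ is recovered on the nose.

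The bookkeeping has to close up exactly: the first factors of the two copies have dimensions $2m-1$ and $2m'-1$, summing to $2n$, so they must be placed complementarily in $A$; the third factors have dimensions $2m$ and $2m'$ and must overlap in precisely the $2$-plane $\langle z^1_1,z^2_1\rangle$; and the twist on the second copy must be exactly what turns its hole-row into the missing $(1,1)$ block of $M_{\langle n,2,2\rangle}$. I expect this last point to be the only real subtlety: the naive gluing (both copies untwisted, their row labels simply concatenated) fails, because along the shared seam it either double counts the $j=2$ contribution of that row or leaves the $(1,1)$ entry uncovered; the fix is to observe that the BCLRS hole $x^1_1\ot(y^1_1\ot z^1_1+y^1_2\ot z^2_1)$ is the product of a single $x$-entry with a full ``row'' of the $2\times 2$ matrix, so that copy $2$'s hole matches copy $1$'s missing entry precisely after exchanging the two columns. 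One routine remark: the approximating families produced this way need not satisfy the linear-independence clause in the definition of a border rank \emph{algorithm}, but that clause can always be arranged for a minimal-length family, so the bound $\ur(M_{\langle n,2,2\rangle})\le r+r'$ is unaffected.
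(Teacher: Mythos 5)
Your proposal is correct and is exactly the argument the paper has in mind: the paper states this observation without proof (attributing it to BCLR), and the intended justification is precisely your gluing of the two BCLRS tensors along complementary blocks of $A$, overlapping $C$-blocks in $\langle z^1_1,z^2_1\rangle$, with the second copy twisted by the transposition of $v_1,v_2$ so that its hole-row supplies the missing $x^1_1\ot(y^1_1\ot z^1_1+y^1_2\ot z^2_1)$ term, together with the standard facts that border rank is subadditive and does not increase under maps of the form $f\ot g\ot h$. Your closing remark is also apt: the proposition concerns $\ur$ only, so the linear-independence clause in the paper's definition of a border rank algorithm plays no role in the bound.
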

 
Upper bounds on the border ranks
of these tensors are:    
   $\ur(M_{\langle n,2,2\rangle})\leq 3n+\lceil \frac n7\rceil$  for all   $n$, 
  \cite{MR534068,AlSmir,MR3343116}.
  (Equality holds for $n=1$ (classical), and
$n=2$ \cite{MR2188132},  see \cite{HILsec} for a better proof.) 
  
  It would be reasonable to expect that 
 the BCLR, Alekseev-Smirnov,  and Smirnov algorithms     generalize to all $m$, so that
  $\ur(T_{BCLRS,m})\leq 3m-1$.
If that happens,    Proposition \ref{twoaddprop} would imply
that $\ur(  M_{\langle n,2,2\rangle})\leq 3n+1$ for all $n$.
\begin{proposition} $\ur(T_{BCLR})= 5$ and for $m>2$, $  \ur(T_{BCLRS,m})\geq 3m-2$
\end{proposition}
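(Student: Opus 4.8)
The plan is to establish the two assertions separately: the exact value $\ur(T_{BCLR})=5$ and the lower bound $\ur(T_{BCLRS,m})\geq 3m-2$ for $m>2$. For the upper bound part of the first assertion, I would simply exhibit (or cite from \cite{MR534068}, reproduced in the paper as \eqref{bcrlexpr}) the explicit border rank $5$ algorithm for $T_{BCLR}=T_{BCLRS,2}$, so that only the matching lower bounds $\ur(T_{BCLR})\geq 5$ and $\ur(T_{BCLRS,m})\geq 3m-2$ need argument. For these I expect the right tool to be the substitution method (a.k.a.\ the border substitution method), which bounds border rank from below by combining rank/border rank estimates of smaller tensors obtained by projecting away from well-chosen hyperplanes in one of the three factors. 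Concretely, $T_{BCLRS,m}$ lives in $\BC^{2m-1}\ot\BC^4\ot\BC^{2m}$, and the $A$-factor $\BC^{2m-1}$ has a natural decomposition coming from the $x^i_j$ with $(i,j)\neq(1,1)$; restricting to the subspace where the first row variables $x^1_2$ (and the coupling with the deleted $x^1_1$) are killed should leave a copy of $M_{\langle m-1,2,2\rangle}$ plus a manageable remainder, letting one induct on $m$.

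The key steps, in order, are: (1) recall the substitution/border-substitution lemma in the form ``if $T\in A\ot B\ot C$ and $a\in A^*$ is generic, then $\ur(T)\geq \ur(T|_{\ker a}) + 1$'' (and its iterated version), citing the standard references; (2) analyze the $A$-flattening (or $B$-flattening, since $\dim B=4$ is smallest) of $T_{BCLRS,m}$ to identify the relevant kernel restrictions — here the structure of matrix multiplication, $M_{\langle m,2,2\rangle}=\Id_U\ot\Id_V\ot\Id_W$, makes the bookkeeping transparent: killing one $x$-variable degrades $M_{\langle m,2,2\rangle}$ to $M_{\langle m-1,2,2\rangle}\oplus(\text{small})$; (3) set up the induction on $m$ with base case $m=3$, where one must show $\ur(T_{BCLRS,3})\geq 7$ directly, presumably by a more hands-on argument combining the border substitution method with the known value $\ur(M_{\langle 2,2,2\rangle})=7$ (Strassen/Winograd, via \cite{MR534068} and the improved argument in \cite{HILsec}); (4) carry the inductive step $\ur(T_{BCLRS,m})\geq \ur(T_{BCLRS,m-1})+3 \geq (3(m-1)-2)+3 = 3m-2$, the ``$+3$'' coming from peeling off one value of the row index and accounting for the three new rank-one contributions forced by the $2\times 2$ block; (5) for $T_{BCLR}$ itself, run the same kind of lower-bound argument to get $\ur(T_{BCLR})\geq 5$, matching the explicit algorithm.

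The main obstacle I anticipate is step (3)–(4): controlling the remainder tensor after the substitution precisely enough to extract the full ``$+3$'' at each stage, rather than a weaker ``$+2$'' or ``$+1$''. Border rank lower bounds are notoriously delicate because the usual substitution lemma only gives ``$+1$'' per generic hyperplane, and squeezing out three units per index value requires either a cleverly chosen multi-dimensional restriction (an iterated substitution whose kernel is cut out by several forms simultaneously) together with an irreducibility/genericity check that the resulting limit decomposition cannot degenerate, or else an appeal to a known structural result about the border rank of $M_{\langle n,2,2\rangle}$-type tensors. A secondary subtlety is that border rank, unlike rank, is only lower semicontinuous, so any substitution argument must be phrased in the border-rank-compatible form (taking limits of the rank-one families $T_j(t)$ and arguing that the projected families still witness a border decomposition of the restricted tensor); getting this technical point exactly right, and verifying the base case $m=3$ completely, is where the real work lies.
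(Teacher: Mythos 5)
There is a genuine gap: your steps (3)--(4), which you correctly identify as ``where the real work lies,'' are in fact the entire content of the lower bound, and nothing in the proposal supplies them. The substitution method, even in its iterated or border-compatible form, yields only ``$+1$'' per linear form killed (and the border version requires a \emph{minimum} over all hyperplanes, not a generic one, so the lemma as you state it is not correct); peeling off one row of the $m\times 2$ matrix kills two coordinates of the $A$-factor and thus can be expected to give at most ``$+2$'' per step by this route, not the ``$+3$'' your induction needs. Likewise the base case $\ur(T_{BCLRS,3})\geq 7$ is asserted but not proved, and it does not follow from $\ur(M_{\langle 2,2,2\rangle})=7$, since $T_{BCLRS,3}$ is not a degeneration carrying that tensor in any obvious way. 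So as written the argument establishes neither $\ur(T_{BCLR})\geq 5$ nor $\ur(T_{BCLRS,m})\geq 3m-2$.

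The paper's proof avoids induction and substitution entirely and uses Strassen's equations instead: if $T(\a)$ is a maximal-rank slice used to identify a space of slices with endomorphisms, then for $X_1,X_2$ in that space, $\ur(T)\geq \frac12\trank[X_1,X_2]+\bbb$. Applying this to the $2m\times(2m-1)$ matrix of linear forms $T_{BCLRS,m}(B^*)$ (passing to a square submatrix and making generic choices), every $2\times 2$ block except the first contributes a rank-two piece to the commutator, giving $\ur(T_{BCLRS,m})\geq \frac12(2m-2)+(2m-1)=3m-2$ in one stroke; for $m=2$ a slightly different choice, using $T_{BCLR}(A^*)\subset\BC^4\ot\BC^4$, gives $\ur(T_{BCLR})\geq \frac12(2)+4=5$, which matches the explicit BCLR border rank $5$ algorithm for the upper bound. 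If you want to salvage your plan, you would need either a genuinely multi-dimensional border substitution argument with a proof that three units are gained per row, or simply to replace the inductive scheme by a slice-commutator (Strassen-type) computation as above.
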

\begin{proof}
The upper bound for $T_{BCLR}$ comes from \cite{MR534068}.

For the lower bounds we use Strassen's equations \cite{Strassen505}: Let $T\in A\ot B\ot C$ be  such that there exists $\a\in A^*$ with $T(\a)\in B\ot C$ of
maximal rank. Assuming $\bbb=\tdim B\leq \tdim C$, take $C'\subset C^*$ with $\tdim C'=\bbb$, and such that
$\trank T(\a)|_{B^*\times  C'}=\bbb$. Then use $T(\a)|_{B^*\ot C'}$ to identify $B^*\ot C'\simeq \tend(B)$,
Strassen's equations state that for all $X_1,X_2\in T(A^*)|_{\tend(B)}$,
letting $[X_1,X_2]$ denote their commutator, 
$$
\ur(T)\geq \frac 12\trank[X_1,X_2]+\bbb.
$$

Consider 
$$T_{BCLRS,m}(B^*)=
\begin{pmatrix}
y^2_1&y^2_2    & &  & &   &   \\
& &  y^1_1&y^1_2    & &   &   \\
& &  y^2_1&y^2_2    & &   &   \\
& &   & &\ddots   & &   &  \\
& &  &   & &      y^1_1&y^1_2 \\
& &  &   & &       y^2_1&y^2_2
\end{pmatrix}
$$
This is a $2m\times (2m-1)$ matrix of linear forms. Take the submatrix setting the first
column to zero to have a square matrix. Making generic choices, the first 
$1\times 1$ block will not contribute to the commutator but all other blocks contribute
a rank two matrix.
We conclude 
$\ur(T_{BCLRS,m})\geq \frac 12(2m-2)+2m-1=3m-2$.
When $m=2$ we can do a little better by considering $T_{BCLR}(A^*)\subset \BC^4\ot \BC^4$. Under generic choices the commutator of each
$2\times 2$  block contributes a rank one matrix
and we conclude $\ur(T_{BCLR})\geq \frac 12(2)+4=5$.
\end{proof}

Similarly, Strassen's equations imply
$ 
\ur(M_{\langle n,2,2\rangle})\geq 3n .
$ 
In summary:
$$
3n\leq \ur(M_{\langle n,2,2\rangle})\leq 3n+\lceil\frac n7\rceil
$$
and there is evidence for an upper bound of $3n+1$.

 \section{What is a border rank algorithm?}\label{brgeom}

Usually a border rank algorithm is presented as
$$
T=\tlim_{t\ra 0} T_1(t)+\cdots + T_r(t)
$$
with each $T_j(t)$ of rank one and the $T_j(t)$ linearly independent when $t\neq 0$. To work   geometrically
we   focus on the curve of $r$-planes $\langle T_1(t)\hd T_r(t)\rangle
\subset G(r,A\ot B\ot C)$ that the border rank algorithm defines.
Here, for a vector space $V$, $G(r,V)$ denotes the Grassmannian of
$r$-planes through the origin in $V$.

For the purposes of this paper, a border rank algorithm is a point  $E\in G(r,A\ot B\ot C)$ such that
$T\in E$ and there exists a curve $E_t$ limiting to $E$ with
$E_t$ spanned by $r$ rank one elements for all $t>0$.

\begin{remark} More precisely a border rank algorithm should be thought
of as an $h$-jet of a curve in the Grassmannian that is the $h$-jet of some
curve spanned by rank one elements. 
\end{remark}

\begin{remark} This discussion generalizes to arbitrary secant varieties,
see \cite{LMborderalg}.
\end{remark} 

A border rank algorithm will not be a rank algorithm
when $T_1(0)\hd T_r(0)$ fail to be linearly independent. Say this is the case  and no subset of the points
fails to be linearly independent. Then $T$   can be any point in 
$\langle \hat T_{T_1(0)}Seg(\BP A\times \BP B\times \BP C)\hd \hat T_{T_r(0)}Seg(\BP A\times \BP B\times \BP C)\rangle$,
where $\hat T_xSeg(\BP A\times \BP B\times \BP C)\subset A\ot B\ot C$ denotes the affine tangent space to the Segre at $x$, 
see \cite[\S 10.8]{MR2865915}. In this case we call the algorithm {\it first order}.

A {\it second order algorithm} occurs when the sum of the tangent vectors fails to be linearly independent
from the original $r$ vectors (which themselves fail to be linearly independent). In this case, for
each of the tangent vectors appearing, there is its image under the second fundamental form as described
in Equation \eqref{iivect} below, and $T$ is the sum of these vectors
plus   any point in the sum of the tangent spaces.

Higher order algorithms exist, but we do not discuss their geometry in this paper.

\section{On the geometry of the Segre variety}\label{seggeom}

In order to have a border rank algorithm one must have
$r$ points on the Segre that fail to be linearly independent. 
The most na\"\i ve way to attain this is to have a point appearing at least twice.
For example, the most classical tensor with border rank lower than rank is
$$
a_1\ot b_1\ot c_2+a_1\ot b_2\ot c_1+a_2\ot b_1\ot c_1=
\tlim_{t\ra 0}\frac 1t[
(a_1+ta_2)\ot (b_1+tb_2)\ot (c_1+tc_2)- a_1\ot b_1\ot c_1]
$$
where both points limit to $a_1\ot b_1\ot c_1$.

The next most na\"\i ve limits are when $r $ points all lie on an $r-1$-plane.
This is the case for Sch\"onhage's algorithm for the sum of two disjoint tensors \cite{MR623057}.

The configurations that   arise  in border rank algorithms
for $T_{BCLRS,m}$  are more interesting. What follows are geometric preliminaries needed
to describe them.

\medskip

We first describe lines on Segre varieties.
There are three types: $\a$-lines, which are of the form
$\BP( \langle a_1,a_2\rangle\ot b\ot c)$ for some
$a_j\in A$, $b\in B$, $c\in C$, and the other two types are defined similarly and
called 
$\b$ and $\g$ lines.

\medskip

Given two    lines $L_{\b},L_{\g}\subset Seg(\BP A\times \BP B\times \BP C)$ respectively of type $\b,\g$, if they
do not intersect, then $  \langle L_\b, L_\g\rangle= \pp 3$ and if the lines are general, furthermore
$\langle L_\b,L_\g\rangle\cap Seg(\BP A\times \BP B\times \BP C)=L_\b\sqcup L_\g$.

However if $L_{\b}=\BP(a\ot\langle b_1,b_2\rangle\ot c)$ and $L_{\g}=\BP (a'\ot b\ot \langle c_1,c_2\rangle)$
with $b\in \langle b_1,b_2\rangle$ and $c\in\langle c_1,c_2\rangle$, then they still span a $\pp 3$ but 
$\langle L_\b,L_\g\rangle\cap Seg(\BP A\times \BP B\times \BP C)=L_\b\sqcup L_\g\sqcup L_{\a}$,
where $L_{\a}=\BP (\langle a,a'\rangle\ot  b\ot c)$, and $L_{\a}$ intersects both $L_{\b}$ and $L_{\g}$. 

\medskip

  Let $x,y,z\in Seg(\BP A\times \BP B\times \BP C)$
be distinct points that 
all lie on a line $L\subset Seg(\BP A\times \BP B\times \BP C)$. Then
$$\hat T_xSeg(\BP A\times \BP B\times \BP C)
\subset \langle \hat T_ySeg(\BP A\times \BP B\times \BP C),\hat T_zSeg(\BP A\times \BP B\times \BP C)\rangle .
$$
In fact, the analogous statement is true for lines on any cominuscule variety, see \cite[Lemma 3.3]{MR3239293}.
Because of this, it will be more geometrical to refer
to 
$\hat T_LSeg(\BP A\times \BP B\times \BP C):=\langle \hat T_ySeg(\BP A\times \BP B\times \BP C),\hat T_zSeg(\BP A\times \BP B\times \BP C)\rangle$, as the choice
of $y,z\in L$ is irrelevant, at least for first order algorithms.

\medskip

The matrix multiplication tensor 
$M_{\langle U,V,W\rangle}$   \eqref{mmultis}
endows  $A,B,C$ with  additional structure, e.g.,
$B=V^*\ot W$, so there are two types of distinguished  $\b$-lines (corresponding
to lines of rank one matrices), call them $(\b, \nu^*)$-lines and $(\b,\o)$-lines, where,
e.g., a $\nu^*$-line is of the form $\BP (a\ot (\langle v^1,v^2\rangle\ot w)\ot c)$, and
among such lines there are further distinguished  ones where moreover both
$a$ and $c$ also have rank one. Call such further distinguished lines {\it special} $(\b,\nu^*)$-lines.

\section{$T_{BCLR}$}\label{tbcrl}

Here  $A\subset U^*\ot V$ has dimension three, so we don't have
the full space of $2\times 2$ matrices.

What follows is a slight modification of the BCLR algorithm.
We label the points such that $x^1_1$ is set equal to zero. The main difference
is that in the original all five points moved, but here one is stationary.

\begin{align*}
 p_1(t) =    x^1_2  \otimes (y^2_2 + y^2_1) \otimes (z^2_2 + t z^1_1)\\
 p_2(t) =   -(x^1_2 - t x^2_2) \otimes  y^2_2  \otimes (z^2_2 + t(z^1_1+ z^2_1))\\
 p_3(t) =    x^2_1  \otimes (y^2_1 + t y^1_2) \otimes (z^2_2 + z^1_2)\\
 p_4(t) =   (x^2_1 - t x^2_2) \otimes ( - y^2_1 + t y^1_1- t y^1_2) \otimes  z^1_2 \\
 p_5(t) =   -(x^2_1 + x^1_2) \otimes  y^2_1 \otimes  z^2_2 
 \end{align*}
and
\be\label{bcrlexpr}
T_{BCLR}= \frac 1t[p_1(t)+\cdots +p_5(t)].
\ene
Let $E^{BCLR}=\tlim_{t\ra 0}\langle p_1(t)\hd p_5(t)\rangle \in G(5,A\ot B\ot C)$. 

\begin{theorem} Notations as above.
In the BCLR algorithm   $E^{BCLR} \cap Seg(\BP A\times \BP B\times \BP C)$ is the union of three
lines: $L_{12,(\b,\o)}$, which is a special $(\b,\o)$-line, $L_{21,(\g,\o^*)}$, which is a special
$(\g,\o^*)$-line, and $L_{\a}$, which is an $\a$-line with rank one  $b$ and $c$ points.
Moreover,  the 
$C$-point of $L_{12,(\b,\o)}$ lies in the $\o^*$-line of $L_{21,(\g,\o^*)}$,
the $B$-point of $L_{21,(\g,\o^*)}$ lies in the $\o$-line of $L_{12,(\b,\o)}$
and   $L_{\a }$ is the unique line on the Segre intersecting 
$L_{12,(\b,\o)}$ and $L_{21,(\g,\o^*)}$ (and thus it is contained in their span). 

Explicitly:
\begin{align*}
 &L_{12,(\b,\o)} = x^1_2 \otimes (v^2 \otimes W) \otimes z^1_2\\
&L_{21,(\g,\o^*)} = x^2_1 \otimes y^2_2 \otimes (W^* \otimes u_2)\\
&L_{\a } = \langle x^2_1, x^1_2\rangle \otimes y^2_2 \otimes z^1_2.
\end{align*}

Furthermore,  $E^{BCLR}=\langle T_{BCLR}, L_{12,(\b,\o)},L_{21,(\g,\o^*)}\rangle$ and
$$T_{BCLR}\in \langle \hat T_{L_{12,(\b,\o)}}Seg(\BP A\times \BP B\times \BP C),
\hat T_{L_{12,(\b,\o)}}Seg(\BP A\times \BP B\times \BP C)\rangle.
$$
\end{theorem}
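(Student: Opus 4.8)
The plan is to work directly with the explicit parametrization \eqref{bcrlexpr}. First I would compute the limits $p_i(0) = \tlim_{t\to 0} p_i(t)$ as rank one tensors in $Seg(\BP A\times\BP B\times\BP C)$: reading off the five lines, $p_1(0) = x^1_2\otimes(y^2_2+y^2_1)\otimes z^2_2$, $p_2(0) = -x^1_2\otimes y^2_2\otimes z^2_2$, $p_3(0) = x^2_1\otimes y^2_1\otimes(z^2_2+z^1_2)$, $p_4(0) = -x^2_1\otimes y^2_1\otimes z^1_2$, $p_5(0) = -(x^2_1+x^1_2)\otimes y^2_1\otimes z^2_2$. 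One then observes that $\langle p_1(0),p_2(0)\rangle$ is the $(\b,\o)$-line $x^1_2\otimes(v^2\otimes W)\otimes z^2_2$ (using $B = V^*\otimes W$ so that $y^2_1 = v^2\otimes w_1$, $y^2_2 = v^2\otimes w_2$), while $\langle p_3(0),p_4(0)\rangle$ is the $(\g,\o^*)$-line $x^2_1\otimes y^2_1\otimes(W^*\otimes u_2)$. Note these differ slightly in $B$- and $C$-labels from the theorem statement ($y^2_2$ versus $y^2_1$, $z^1_2$ versus $z^2_2$); I would reconcile this by noting that the span of the limit plane is unchanged under reparametrizing the curve, or more simply by verifying that the $L_{12}$, $L_{21}$, $L_\a$ written in the statement all lie in $E^{BCLR}$ directly — see below. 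The incidence claims (the $C$-point of $L_{12}$ lying in the $\o^*$-line of $L_{21}$, etc.) then follow from inspecting which basis vectors $v^2, w_1, w_2, u_2$ appear; and $L_\a = \langle x^2_1,x^1_2\rangle\otimes y^2_2\otimes z^1_2$ is manifestly the unique line meeting both special lines, being the $\a$-line through their common rank one $B$-point and $C$-point, so by the third geometric preliminary in \S\ref{seggeom} it lies in $\langle L_{12},L_{21}\rangle$.

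Next, to show $E^{BCLR}\cap Seg$ is \emph{exactly} these three lines and nothing more, I would argue as follows. The two special lines $L_{12}, L_{21}$ together with the $\a$-line $L_\a$ they force span a $\pp 3$ inside $\BP E^{BCLR}$ (this is the $L_\b\sqcup L_\g\sqcup L_\a$ configuration of the second geometric preliminary); intersecting that $\pp 3$ with $Seg$ gives precisely $L_{12}\sqcup L_{21}\sqcup L_\a$ by that same preliminary. It then remains to check that the remaining dimension of $E^{BCLR}$ — spanned by $T_{BCLR}$ modulo this $\pp 3$ — contributes no further points of $Seg$. For this I would compute $E^{BCLR}$ as a $5$-plane explicitly (take the $t$-linear parts of the $p_i(t)$, i.e. the tangent directions, and combine with $p_1(0),\dots,p_5(0)$, keeping a spanning set of five), exhibit $T_{BCLR}$ in it via \eqref{bcrlexpr}, and then show the only rank one elements of the $5$-plane are those on the three lines — e.g. by writing a general element of $E^{BCLR}$ in coordinates as a pencil/bundle of small matrices (flattening to $B$ or to $C$) and imposing the rank one condition, which cuts out exactly the three lines. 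The tangent-space assertion $T_{BCLR}\in\langle \hat T_{L_{12}}Seg, \hat T_{L_{12}}Seg\rangle$ — where I read the second factor as $\hat T_{L_{21}}Seg$, a typo in the statement — is essentially immediate once the limits are in hand: since $p_1(0),p_2(0)$ both lie on $L_{12}$ and $p_3(0),p_4(0)$ both lie on $L_{21}$, and $p_5(0)\in\langle p_1(0),p_2(0),p_3(0),p_4(0)\rangle$ (a linear dependence one checks from the list above), the five points are \emph{not} independent, so $T_{BCLR} = \tlim \frac1t\sum p_i(t)$ is a sum of tangent vectors $\dot p_i(0)$ to $Seg$ at points of $L_{12}$ and $L_{21}$ (plus a point of their tangent spaces), hence lands in $\langle\hat T_{L_{12}}Seg, \hat T_{L_{21}}Seg\rangle$ by the first-order algorithm discussion in \S\ref{brgeom} together with the line-tangent-space identity $\hat T_L Seg = \langle \hat T_y Seg,\hat T_z Seg\rangle$. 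Finally $E^{BCLR} = \langle T_{BCLR}, L_{12}, L_{21}\rangle$ follows by a dimension count: the right side has dimension at most $1 + 2 + 2 = 5$, it is contained in $E^{BCLR}$ (each generator is), and it is $5$-dimensional because $T_{BCLR}$ is independent of the $4$-plane $\langle L_{12},L_{21}\rangle$ — which one checks on the $t^0$ and $t^1$ coefficients.

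The main obstacle I anticipate is the exactness of $E^{BCLR}\cap Seg$: ruling out "extra" rank one points in the $5$-plane beyond the obvious $\pp 3$ requires actually writing down a basis of $E^{BCLR}$ and solving the rank one equations on it, which is a genuine (if finite) computation rather than a soft argument. Everything else — identifying the three lines, the incidence relations, the tangent-space membership, and the final span identity — is bookkeeping with the explicit parametrization, modulo carefully matching the $B$- and $C$-labels in the theorem statement against the $p_i(0)$ computed from \eqref{bcrlexpr} (the discrepancies $y^2_1$ vs.\ $y^2_2$ and $z^2_2$ vs.\ $z^1_2$ should wash out after allowing a change of the curve $E_t$ within its jet, but I would state explicitly which reparametrization achieves the normalized form printed in the theorem).
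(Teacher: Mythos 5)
Your route is essentially the paper's: compute the limit points $p_i(0)$, note that $p_1(0),p_2(0)$ span a special $(\beta,\omega)$-line and $p_3(0),p_4(0)$ a special $(\gamma,\omega^*)$-line with $p_5(0)$ on the forced $\alpha$-line, prove exactness of $E^{BCLR}\cap Seg$ by imposing the rank-one conditions on a general element of the $5$-plane, and get the tangent-space membership from the first-order data. The paper streamlines the exactness step: a rank-one element of $E^{BCLR}$ cannot involve $T_{BCLR}$ because the summand $x^2_2\otimes(y^2_1\otimes z^1_2+y^2_2\otimes z^2_2)$ has rank two and cannot be cancelled by $p_1,\dots,p_4$, which reduces the check to four monomial equations on the small $2\times2\times2$ Segre; your flattening computation would land in the same place.

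Two concrete points to fix. First, the label mismatch you flagged cannot be absorbed by reparametrizing the curve or changing its jet: $E^{BCLR}\cap Seg$ is intrinsic to the limiting $5$-plane, and the computation forces the lines $x^1_2\otimes(v^2\otimes W)\otimes z^2_2$, $x^2_1\otimes y^2_1\otimes(W^*\otimes u_2)$ and $\langle x^1_2,x^2_1\rangle\otimes y^2_1\otimes z^2_2$, which is what the paper's proof exhibits. The lines as printed in the theorem's ``Explicitly'' list do not lie in $E^{BCLR}$ at all (e.g.\ $x^1_2\otimes y^2_2\otimes z^1_2$ has zero coefficient in every element of $\langle p_1(0),\dots,p_4(0),T_{BCLR}\rangle$), so your fallback of ``verifying the statement's lines lie in $E^{BCLR}$ directly'' would fail; the printed formulas are typos, and your write-up should simply replace them by the computed lines (the verbal incidence statements hold for either labeling). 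Second, your soft argument for $T_{BCLR}\in\langle\hat T_{L_{12,(\beta,\omega)}}Seg,\hat T_{L_{21,(\gamma,\omega^*)}}Seg\rangle$ needs the extra observation that $p_5(t)$ is constant, so $\dot p_5(0)=0$: as stated, the first-order discussion of \S\ref{brgeom} only gives membership in the larger span that also includes $\hat T_{p_5(0)}Seg$, and $p_5(0)$ lies on $L_{\alpha}$, not on either special line. The paper closes this by computing the derivatives $p_i(0)'$ explicitly (in particular $p_5(0)'=0$) and grouping $T_{BCLR}=(p_1'+p_2')+(p_3'+p_4')$; since you compute the $t$-linear parts anyway, you should record this vanishing explicitly.
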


\begin{proof}
Write $p_j=p_j(0)$. Then (up to sign, which is irrelevant for geometric considerations)
\begin{align*}
p_1= & x^1_2              \ot (y^2_2 + y^2_1)          \ot z^2_2         \\
p_2= & x^1_2                 \ot y^2_2                    \ot z^2_2 \\
p_3= & x^2_1                \ot y^2_1                \ot (z^2_2 + z^1_2)      \\
p_4= & x^2_1                 \ot  y^2_1                \ot z^1_2        \\
p_5= & (x^2_1 + x^1_2) \ot  y^2_1                       \ot z^2_2         \\
\end{align*} 
The configuration of lines is as follows: 
\begin{align*}
p_1, p_2 \in  x^1_2  \otimes (v^2 \otimes W) \otimes z^2_2\\
 p_3, p_4 \in  x^2_1  \otimes  y^2_1 \otimes ( W^* \otimes u_2)\\
 p_5 \in  \langle x^1_2, x^2_1 \rangle  \otimes y^2_1 \otimes z^2_2.\end{align*}
 
 To see there are no other points in $E^{BCLR}\cap Seg(\BP A\times \BP B\times \BP C)$,
 first note that any such point would have to lie on
 $Seg(\BP \langle x^1_2,x^2_1\rangle\times \BP \langle y^2_1,y^2_2\rangle
 \times \BP \langle z^1_2,z^2_2\rangle)$ because there is no  way to
 eliminate the  rank two $x^2_2\ot (y^2_1\ot z^1_2+y^2_2\ot z^2_2)$ term in $T_{BCLR}$ with a linear 
 combination of $p_1\hd p_4$.
 Let $[(sx^1_2+tx^2_1)\ot (uy^2_2+vy^2_1)\ot (pz^2_2+qz^1_2)]$
 be an arbitrary point on this variety. To have it be in the 
 span of $p_1\hd p_4$ it must satisfy the equations
 $suq=0$, $svq=0$, $tuq=0$, $tup=0$. Keeping in mind that one cannot
 have $(s,t)=(0,0)$, $(u,v)=(0,0)$, or $(p,q)=(0,0)$,  we conclude the
 only solutions are the three lines already exhibited. 
 
We have
\begin{align*}
 p_1(0)' &=    x^1_2  \otimes (y^2_2 + y^2_1) \otimes   z^1_1 \\
 p_2(0)' &=       x^2_2  \otimes  y^2_2  \otimes  z^2_2  
 - x^1_2  \otimes  y^2_2  \otimes   (-   z^2_1 +   z^1_1)\\
 p_3(0)'& =    x^2_1  \otimes  y^1_2  \otimes (z^2_2 + z^1_2)\\
 p_4(0)' &=   x^2_2  \otimes   y^2_1  \otimes  z^1_2
 + x^2_1   \otimes (  y^1_1-   y^1_2) \otimes  z^1_2\\
 p_5(0)'& =   0
 \end{align*}
 
Then $T_{BCLR}=(p_1'+p_2')+(p_3'+p_4')$ where $p_1'+p_2'\in T_{L_{12,(\b,\o)}}Seg(\BP A\times \BP B\times \BP C)$
and $p_3'+p_4'\in T_{L_{21,(\g,\o^*)}}Seg(\BP A\times \BP B\times \BP C)$.
\end{proof}

 \begin{remark}
 When we allow $GL_4^{\times 3}\rtimes \FS_3$ to act on $\BC^4\ot \BC^4\ot \BC^4$,
 the subgroup preserving the
  tensor  $M_{\langle 2, 2, 2\rangle}$   
  is $SL_2\times SL_2\times SL_2\rtimes \BZ_3$, where
  the $SL_2$'s are respectively $SL(U),SL(V),SL(W)$
  and the $\BZ_3$ is most easily see by   viewing matrix multiplication as trilinear map which   sends three matrices to the trace of their product: 
  $M, N, L \mapsto \Tr(MNL)$. 
  The $\BZ_3$-symmetry follows as 
  $\Tr(MNL)=\Tr(NLM)$. The full  symmetry group includes
  a $\BZ_2\rtimes \BZ_3$ action as $\Tr(X)=\Tr(X^T)$, where $X^T$ denotes the
  transpose of $X$.   By removing $x^1_1$ from our tensor, we lose the
  $\BZ_3$, but retain a $\Z_2$ action which corresponds to $\Tr(MNL) = \Tr(M^TL^TN^T)$. 
  Similarly we lose our $\GL(U) \times \GL(V)$ symmetry but retain our $\GL(W)$ action. 
  By composing our discrete $\Z_2$ symmetry with another $\Z_2$ action which switches the basis vectors of $W$,
  the action swaps $p_1(t) + p_2(t)$   with $p_3(t) + p_4(t)$ and $L_{12, (\b, \o)}$   with $L_{21,(\g,\o^*)}$. This $\Z_2$ action fixes $p_5(t)$. 
\end{remark}
 
 \begin{remark}
Note that it is important that $p_5$ lies neither on $L_{12,(\b,\o)}$ nor on $L_{21,(\g,\o^*)}$, so that no subset of the five points
lies in a linearly degenerate position to enable us to have tangent vectors coming from all five points, but we emphasize that
any such point (i.e., any point on the line $L_{\a}$ not on the original lines) would have worked equally well, so the
geometric object is this configuration of lines. 
 \end{remark}

\section{$T_{BCLRS,3}$}\label{tbcrls3}

Here  is the algorithm in \cite[Thm. 2]{AlSmir} only changing the element set to zero to $x^1_1$ (it is $x^3_2=a^3_2$ in \cite{AlSmir}).

\begin{align*}
&p_1(t) =   (\frac{-1}{2}t^2 x^3_2 - \frac{1}{2} t x^2_1 + x^2_1) \otimes (-y^2_1 + y^2_2 + t y^1_1) \otimes (z^1_3 + t z^1_2)\\
&p_2(t) =  (x^2_1 + \frac{1}{2} x^1_2) \otimes (y^2_1 - y^2_2) \otimes (z^1_3 + z^2_3 + t z^1_2 + t z^2_2)\\
&p_3(t) =   (t^2 x^3_2 + t x^3_1 - \frac{1}{2}t  x^2_2 - x^2_1) \otimes (y^2_1 + y^2_2 + t y^1_2) \otimes  z^2_3 \\
&p_4(t) =   (\frac{1}{2} t^2 x^3_2 - t x^3_1 - \frac{1}{2} t x^2_2 + x^2_1) \otimes (y^2_1 + y^2_2 - t y^1_1) \otimes  z^1_3 \\
&p_5(t) =   (-t^2 x^3_2 + t x^2_2 - x^1_2) \otimes y^2_1 \otimes (z^2_3 + \frac{1}{2} t z^1_2 + \frac{1}{2} t z^2_2 - t^2 z^1_1)\\
&p_6(t) =   (\frac{1}{2}  t x^2_2 + x^2_1) \otimes (-y^2_1 + y^2_2 + t y^1_2) \otimes (z^2_3 + t z^2_2)\\
&p_7(t) =   ( -tx^3_1 + x^2_1 + \frac{1}{2} x^1_2) \otimes (y^2_1 + y^2_2) \otimes (-z^1_3 + z^2_3)\\
&p_8(t) =   (t x^2_2 + x^1_2) \otimes y^2_2 \otimes (z^1_3 + \frac{1}{2} t z^1_2 + \frac{1}{2} t z^2_2 + t^2 z^2_1)
\end{align*}

Then 
$$T_{BCLRS,3}=\frac 1{t^2}[p_1(t)+\cdots + p_8(t)].$$
Let $E^{AS,3}=\tlim_{t\ra 0}\langle
p_1(t)\hd  p_8(t)\rangle\in G(8,A\ot B\ot C)$.

\begin{theorem} Notations as above.
In the Alekseev-Smirnov  algorithm for $T_{BCLRS,3}$,  $E^{AS,3}\cap Seg(\BP A\times \BP B\times \BP C)$ is the union of 
two irreducible algebraic surfaces, both abstractly isomorphic to  $\pp 1\times \pp 1$:  
The first is a sub-Segre variety: 
$$Seg_{21,(\b,\o),(\g,\o^*)}:=
[x^2_1]\times \BP (v^2\ot W)\times \BP (W^*\ot u_3),
$$
The second,  $\BL_{\a}$   is a union of lines passing through $Seg_{21,(\b,\o),(\g,\o^*)}$
and the plane conic curve:
$$
C_{12,(\b,\o),(\g,\o^*)}:=\BP(\cup_{[s,t]\in \pp 1}  x^1_2\ot (sy^2_1-ty^2_2)\ot(sz^2_3+tz^1_3)).
$$
The three varieties $C_{12,(\b,\o),(\g,\o^*)}$, $Seg_{21,(\b,\o),(\g,\o^*)}$, and $\BL_{\a}$ respectively play
roles analogous to the 
lines  $L_{12,(\b,\o)}$, $L_{21,(\g,\o^*)}$,   and $L_{\a}$,  as described below. 
\end{theorem}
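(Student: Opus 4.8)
The plan is to follow the structure of the proof in the $T_{BCLR}$ case: first identify the $8$-plane $E^{AS,3}$ completely, and then intersect it with the Segre variety. Setting $p_j=p_j(0)$, one reads off from the eight formulas that each $p_j$ lies in the eight-dimensional space $\langle x^2_1,x^1_2\rangle\ot(v^2\ot W)\ot(W^*\ot u_3)$ (recall $\langle y^2_1,y^2_2\rangle=v^2\ot W$ and $\langle z^1_3,z^2_3\rangle=W^*\ot u_3$). The identity $p_1(t)+\cdots+p_8(t)=t^2T_{BCLRS,3}$ defining the algorithm forces the relation $p_1(0)+\cdots+p_8(0)=0$, and a short linear-algebra check shows that, up to scale, this is the only relation among the $p_j(0)$; hence $P:=\langle p_1(0),\dots,p_8(0)\rangle$ is a hyperplane in that eight-dimensional space --- concretely the one on which the coefficient of $x^1_2\ot y^2_1\ot z^1_3$ plus the coefficient of $x^1_2\ot y^2_2\ot z^2_3$ vanishes (equivalently, the $x^1_2$-slice is trace-free). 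Since $T_{BCLRS,3}$ is not even supported on this eight-dimensional space (it involves, e.g., $x^3_1$), it does not lie in $P$; and since $\langle p_1(t),\dots,p_8(t)\rangle=\langle p_1(t),\dots,p_7(t),\,t^{-2}\sum_j p_j(t)\rangle=\langle p_1(t),\dots,p_7(t),T_{BCLRS,3}\rangle$ for $t\neq0$, passing to the limit gives $E^{AS,3}=\langle P,\,T_{BCLRS,3}\rangle$, of dimension $8$. In particular, in contrast with a generic second order algorithm, no second fundamental form computation is needed to pin down the plane.

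Next I would compute $E^{AS,3}\cap Seg(\BP A\times\BP B\times\BP C)$. Any rank one tensor in $E^{AS,3}$ has the form $a\ot b\ot c=w+\lambda T_{BCLRS,3}$ with $w\in P$. To rule out $\lambda\neq0$ one examines the $x^3_1$-slice: that of $w$ is $0$ (since $P$ only involves $x^2_1,x^1_2$), while that of $T_{BCLRS,3}$ is $y^1_1\ot z^1_3+y^1_2\ot z^2_3$, which has rank two in $B\ot C$; but the $x^3_1$-slice of a rank one tensor $a\ot b\ot c$ has rank at most one, a contradiction. Hence $\lambda=0$, and $E^{AS,3}\cap Seg$ equals the rank one locus of $\BP P$.

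It remains to classify the rank one points of $\BP P$ by direct parametrization. A nonzero rank one tensor lying in the subspace $P$ necessarily has all three factors in the respective two-dimensional coordinate subspaces, so we may write a general such point as $[(sx^2_1+tx^1_2)\ot(uy^2_1+vy^2_2)\ot(pz^1_3+qz^2_3)]$, and membership in $P$ becomes the single equation $t(up+vq)=0$. If $t=0$ we recover precisely the sub-Segre $Seg_{21,(\b,\o),(\g,\o^*)}=[x^2_1]\times\BP(v^2\ot W)\times\BP(W^*\ot u_3)$, which is abstractly $\pp1\times\pp1$. If $t\neq0$ then $up+vq=0$, so $(p,q)$ is proportional to $(v,-u)$; letting $[s,t]$ and $[u,v]$ vary freely and taking the closure, this component is $\BP\langle x^2_1,x^1_2\rangle\times\Gamma$, where $\Gamma=\{[(uy^2_1+vy^2_2)\ot(vz^1_3-uz^2_3)]:[u,v]\in\pp1\}$ is a smooth conic spanning only a $\pp2$ (hence a plane conic), which after the substitution $(u,v)=(s,-t)$ is exactly $C_{12,(\b,\o),(\g,\o^*)}$. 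Thus this component is $\cong\pp1\times\pp1$; for each $\gamma\in\Gamma$ it contains the $\a$-line $\BP\langle x^2_1,x^1_2\rangle\times\{\gamma\}$, which meets $Seg_{21,(\b,\o),(\g,\o^*)}$ in $[x^2_1\ot\gamma]$ and meets $C_{12,(\b,\o),(\g,\o^*)}$ in $[x^1_2\ot\gamma]$, so this component is $\BL_\a$, the union of lines joining the quadric surface $Seg_{21,(\b,\o),(\g,\o^*)}$ to the plane conic $C_{12,(\b,\o),(\g,\o^*)}$. As a sanity check, one verifies directly that $p_1(0),p_3(0),p_4(0),p_6(0)$ lie on $Seg_{21,(\b,\o),(\g,\o^*)}$ while $p_2(0),p_5(0),p_7(0),p_8(0)$ lie on $\BL_\a$.

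Every step here is elementary; the one genuinely substantive observation is that the defining identity $\sum_j p_j(t)=t^2T_{BCLRS,3}$ already exhibits $E^{AS,3}$ as $\langle P,T_{BCLRS,3}\rangle$, so no second-order geometry is needed. The two points that must be verified honestly are that the $p_j(0)$ span exactly a hyperplane of the eight-dimensional ambient space (needed for the dimension count to close), and that the conic $\Gamma$ coincides, up to reparametrization, with $C_{12,(\b,\o),(\g,\o^*)}$; both are short but should not be skipped.
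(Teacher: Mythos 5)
Your argument is correct and follows essentially the same route as the paper: rule out any $T_{BCLRS,3}$-component of a rank-one point of $E^{AS,3}$ via the rank-two $x^3_1$-slice (the analogue of the rank-two $x^2_2$-slice argument in the BCLR case), then solve the resulting incidence condition on $Seg(\BP\langle x^2_1,x^1_2\rangle\times \BP(v^2\ot W)\times \BP(W^*\ot u_3))$ --- precisely the computation the paper compresses into ``similar to the BCLR case.'' The one welcome addition is your explicit identification $E^{AS,3}=\langle p_1(0),\dots,p_7(0),T_{BCLRS,3}\rangle$, obtained by trading $p_8(t)$ for the constant $T_{BCLRS,3}$ in the spanning set, which pins down the limiting $8$-plane directly and makes the intersection computation self-contained without appealing to the first- and second-order (tangent and second fundamental form) data the paper records.
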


\begin{figure}[!htb]\begin{center}\label{BCLRptpic}
\includegraphics[width=9.6cm]{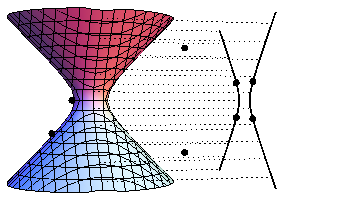} 
\caption{\small{The curve  $C_{12,(\b,\o),(\g,\o^*)}$ with its four points,
the surface $Seg_{21,(\b,\o),(\g,\o^*)}$,   with its 
four points (only two of which are visible), and 
the surface $\BL_{\a}$ with its  two points
   which don't lie on either the curve or surface $Seg_{21,(\b,\o),(\g,\o^*)}$.  }}  
\end{center}
\end{figure}

\begin{proof}
 The limit points are (up to sign):
\begin{align*}
p_1=& x^2_1              \ot  (y^2_1 - y^2_2)    \ot z^1_3         \\
p_3= & x^2_1    \ot (y^2_1 + y^2_2 )             \ot z^2_3         \\
p_4= & x^2_1    \ot (y^2_1 + y^2_2  )       \ot z^1_3         \\
p_6= & x^2_1   \ot ( y^2_1 - y^2_2  )        \ot z^2_3         \\
& \\
p_5= &  x^1_2   \ot  y^2_1                \ot z^2_3         \\
p_8= & x^1_2    \ot y^2_2           \ot z^1_3         \\
& \\
p_2= & (x^2_1 +  \frac{1}{2} x^1_2 )   \ot  (y^2_1 - y^2_2  )      \ot (z^1_3 + z^2_3) \\
p_7= & (x^2_1 + \frac{1}{2}x^1_2 ) \ot (y^2_1 + y^2_2 )      \ot (z^1_3 - z^2_3)\\
\end{align*}
Just as with $T_{BCLR}$, the limit points all lie on a $Seg(\pp 1\times \pp 1\times \pp 1)$, in fact  
the \lq\lq same\rq\rq\ $Seg(\pp 1\times \pp 1\times \pp 1)$. Pictorially the Segres are:
$$
\begin{pmatrix}
0 & * \\ *   \end{pmatrix}\times 
\begin{pmatrix}
  &   \\   * & *\end{pmatrix}
  \times
\begin{pmatrix}
   & * \\   & *\end{pmatrix}
  $$
 for $T_{BCLRS,2}$ and  
  $$
\begin{pmatrix}
0 & * \\ * & \\  & \end{pmatrix}\times 
\begin{pmatrix}
  &   \\   * & *\end{pmatrix}
  \times
\begin{pmatrix}
  &  & * \\   & & *\end{pmatrix}
  $$
for $T_{BCLRS,3}$.
Here $E^{AS,3}\cap Seg(\BP A\times \BP B\times \BP C)$ is the union of a one-parameter
family of lines $\BL_{\a}$  passing through a plane conic  and a special $\pp 1\times \pp 1$:   $Seg_{21,(\b,\o),(\g,\o^*)}:=
[x^2_1]\times \BP (v^2\ot W)\times \BP (W^*\ot u_3)$ (which contains $p_1,p_3,p_4,p_6$).
To define the family and make the similarity with the BCLR case clearer, first define 
  the plane conic curve
$$
C_{12,(\b,\o),(\g,\o^*)}:=\BP(\cup_{[s,t]\in \pp 1}  x^1_2\ot (sy^2_1-ty^2_2)\ot(sz^2_3+tz^1_3)).
$$
The points $p_5,p_8$ lie on this conic (respectively the values $(s,t)=(1,0)$ and $(s,t)=(0,1)$).
Then define the variety
$$
\BL_{\a}:= \BP(\cup_{[\s,\t]\in \pp 1} \cup_{[s,t]\in \pp 1}(\s   x^1_2+\t x^2_1) \ot (sy^2_1-ty^2_2)\ot(sz^2_3+tz^1_3)),
$$
which is a one-parameter family   of lines intersecting  the conic and  the special $\pp 1\times \pp 1$.
The points $p_2,p_7$ lie on $\BL_{\a}$ but not on the conic. Explicitly 
$p_2$ (resp.  $p_7$) is          the point  corresponding to the values $(\s,\t)=(1,\frac 12)$ and  $(s,t)=(1,1)$ (resp.   $(s,t)=(1,-1)$).

The analog of $L_{\a}$ in the $T_{BCLR}$ algorithm  is $\BL_{\a}$,
and $C_{12,(\b,\o),(\g,\o^*)}$ and $Seg_{21,(\b,\o),(\g,\o^*)}$ are the analogs of the lines  $L_{12,(\b,\o)},L_{21,(\g,\o^*)}$.
(A difference here is that $C_{12,(\b,\o),(\g,\o^*)}\subset \BL_{\a}$.)

The span  of the   configuration is the span of a $\pp 2$ (the span of the conic)  and
a $\pp 3$ (the span of the $\pp 1\times \pp 1$), i.e., a $\pp 6$. 

The proof that these are the only points in the intersection is similar to
the BCLR case. \end{proof}

\begin{remark}
We expect that just as with $T_{BCLR}$, the particular $8$ points in this configuration one uses
in the limit are irrelevant  as long as they are sufficiently general that no seven of them fail to be linearly independent. 
\end{remark}

\medskip

The tangent vectors to a point $[a\ot b\ot c]\in Seg(\BP A\times \BP B\times \BP C)$
are of the form $a'\ot b\ot c+ a\ot b'\ot c+ a\ot b\ot c'$. The following chart gives
the vectors $(a',b',c')$ for the tangent vectors that appear in the algorithm. Blank spaces
correspond to a zero vector:
$$\begin{array}{cccc}
p_1: & \frac{-1}{2} x^2_2           & y^1_1 & z^1_2 \\
p_3: & x^3_1 - \frac{1}{2} x^2_2    & y^1_2 &       \\
p_4: & -x^3_1 - \frac{1}{2} x^2_2   & -y^1_1&   \\
p_6: & \frac{1}{2} x^2_2            & y^1_2 & z^2_2 \\
&\\
p_5: & x^2_2                        &       & \frac{1}{2} z^1_2 + \frac{1}{2} z^2_2 \\
p_8: & x^2_2                        &       & \frac{1}{2} z^1_2 + \frac{1}{2} z^2_2 \\
& \\
p_2: &                              &       & z^1_2 + z^2_2\\
p_7: & -x^3_1                       &       &      . \\
\end{array}$$

There are two types of points that can appear at second order:  ordinary tangent vectors, and vectors
arising from the {\it second fundamental form}. The latter must appear: if
a tangent vector $a'\ot b\ot c+ a\ot b'\ot c+ a\ot b\ot c'$ appears at first order, then the vector
\be\label{iivect}a'\ot b'\ot c+ a\ot b'\ot c'+ a'\ot b\ot c'
\ene
must appear at second order, see \cite{MR3239293}. The following chart gives
the new ordinary tangent vectors appearing at second order in the same format as the tangent vectors above:
$$\begin{array}{cccc}
p_1: & \frac{-1}{2} x^3_2   &   &   \\
p_3: &  x^3_2    &   &   \\
p_4: & \frac{1}{2} x^3_2               &   &   \\
p_6: &  &   &\\
& \\
p_5: &    -x^3_2                  &   &   -z^1_1 \\
p_8: &  &   & z^2_1\\
& \\
p_2: &                &   &   \\
p_7: &  &   & .\\
\end{array}$$
Pictorially, the order entries are reached at (which coincides with the expression for $T_{BCLR}$ when one truncates) is 
 $$
\begin{pmatrix}
X & 0 \\ 0 & 1\\ 1 &2 \end{pmatrix}\times 
\begin{pmatrix}
 1 & 1  \\   0 & 0\end{pmatrix}
  \times
\begin{pmatrix}
 2 & 1 & 0\\   2&1 & 0\end{pmatrix}.
  $$

Explicitly:
$$\begin{array}{cc}
p_1(0)'= & \frac{-1}{2} x^2_2 \otimes (-y^2_1 + y^2_2) \otimes z^1_3 + x^2_1 \otimes y^1_1 \otimes z^1_3 + x^2_1 \otimes (-y^2_1 + y^2_2) \otimes z^1_2\\
p_3(0)'= & (x^3_1 - \frac{1}{2} x^2_2) \otimes (y^2_1 + y^2_2) \otimes z^2_3 + x^2_1 \otimes y^1_2 \otimes z^2_3 \\
p_4(0)'= & (-x^3_1 - \frac{1}{2}x^2_2) \otimes (y^2_1 + y^2_2) \otimes z^1_3 + x^2_1 \otimes -y^1_1 \otimes z^1_3 \\
p_6(0)'= & \frac{1}{2} x^2_2 \otimes (-y^2_1 + y^2_2) \otimes z^2_3 + x^2_1 \otimes y^1_2 \otimes z^2_3 + x^2_1 \otimes (-y^2_1 + y^2_2) \otimes z^2_2 \\
& \\ 
p_5(0)'= & x^2_2 \otimes y^2_1 \otimes z^2_3   -x^1_2 \otimes y^2_1 \otimes (\frac{1}{2} z^1_2 + \frac{1}{2} z^2_2) \\
p_7(0)'= & (-x^3_1 + \frac{1}{2} x^1_2) \otimes (y^2_1 + y^2_2) \otimes (-z^1_3 + z^2_3) \\
& \\
p_2(0)'= & (x^2_1 + \frac{1}{2} x^1_2) \otimes (y^2_1 - y^2_2) \otimes (z^1_2 + z^2_2)\\
p_8(0)'= & x^2_2 \otimes y^2_2 \otimes z^1_3 + x^1_2 \otimes y^2_2 \otimes ( \frac{1}{2} z^1_2 + \frac{1}{2} z^2_2) \\
\end{array}$$

We split the $t^2$ coefficients into the two types discussed above: the second fundamental form terms, in the following table,
and the tangent vectors appearing at second order, which are in the table below.
$$\begin{array}{cc}
p_1 & \frac{-1}{2} x^2_2 \otimes y^1_1 \otimes z^1_3 + x^2_1 \otimes y^1_1 \otimes z^1_2 + \frac{-1}{2} x^2_2 \otimes (-y^2_1 + y^2_2) \otimes z^1_2\\
p_3 & (x^3_2 - \frac{1}{2} x^2_2) \otimes y^1_2 \otimes z^2_3 \\
p_4 & (-x^3_1 - \frac{1}{2}x^2_2) \otimes -y^1_1 \otimes z^1_3  \\
p_6 & \frac{1}{2} x^2_2 \otimes y^1_2 \otimes z^2_3 + x^2_1 \otimes y^1_2 \otimes z^2_2 + \frac{1}{2} x^2_2 \otimes (-y^2_1 + y^2_2) \otimes z^2_2 \\
& \\
p_5 & x^2_2 \otimes y^2_1 \otimes (\frac{1}{2} z^1_2 + \frac{1}{2} z^2_2) \\
p_7 &  \\
& \\
p_2 & \\
p_8 & x^2_2 \otimes y^2_2 \otimes ( \frac{1}{2} z^1_2 + \frac{1}{2} z^2_2) \\
\end{array}$$

$$\begin{array}{cc}
p_1 & \frac{-1}{2} x^3_2 \otimes (-y^2_1 + y^2_2) \otimes z^1_3\\

p_3 &  x^3_2 \otimes (y^2_1 + y^2_2) \otimes z^2_3 \\
p_4 & \frac{1}{2} x^3_2 \otimes (y^2_1 + y^2_2) \otimes z^1_3 \\
p_6 & \\
& \\
p_5 & -x^3_2 \otimes y^2_1 \otimes z^2_3 + -x^1_2 \otimes y^2_1 \otimes -z^1_1 \\
p_7 & \\
& \\ 
p_2 & \\
p_8 & x^1_2 \otimes y^2_2 \otimes z^2_1.\\
\end{array}$$
  
Then $T_{BCLRS,3}$ is the sum of the terms in the two tables above.

\section{$T_{BCLRS,4}$}
This algorithm is more complicated and qualitatively different than the others, so we only discuss it briefly.
Note that here the order of the sizes of the matrices are changed: $2\times 4$, $4\times 2$ and $2\times 2$.
\begin{align*}
&p_1(t) =   (x^1_2 - \frac{7}{25} t^3 x^1_3 + x^2_2 - \frac{1}{50} t^3 x^2_3 + t^2 x^2_4) \otimes (\frac{-47}{112} t^4 y^2_2 + \frac{25}{7} t y^3_2 + \frac{8}{21} y^4_1 + t^2 y^4_2) \otimes (-z^1_2 + t^2 \frac{1}{2} z^2_1 + z^2_2)\\
&p_2(t) =   (x^1_2 + \frac{1}{8} x^2_2 - \frac{1}{50} t^3 x^2_3 + \frac{1}{8} t^2 x^2_4) \otimes ( \frac{8}{7} t^4 y^2_2 + \frac{3200}{63} t y^3_2 + \frac{128}{189} y^4_1) \otimes (t^2 \frac{21}{16} z^1_1 + \frac{1}{8} z^1_2 + \frac{13}{16} t^2 z^2_1 - z^2_2)\\
&p_3(t) =   (x^1_2 - \frac{103}{300} x^3 x^1_3 + x^2_2) \otimes (\frac{1}{8} t^5 y^1_2 + \frac{1}{3} t^3 b^2_1 + \frac{25}{16} t^5 y^2_2 + \frac{5}{4} y^3_1 - \frac{1}{3} t y^3_1 - t^3 y^4_2) \otimes (\frac{1}{2} t^2 z^2_1 + z^2_2)\\
&p_4(t) =   (-x^1_2 + x^2_2 + \frac{1}{50} t x^2_3 + x^2_4) \otimes ( -\frac{25}{9} t y^3_2 + \frac{8}{27} y^4_1) \otimes (3t^2 z^1_1 - z^1_2 + \frac{1}{2} t^2 z^2_1 - z^2_2)\\
&p_5(t) =   (x^1_2 + \frac{1}{50} t^3 x^1_3) \otimes (t^3 y^2_1 + \frac{75}{32} y^3_1 - 75 t^2 y^3_2 - t y^4_1) \otimes (t^2 z^1_1 + \frac{2}{3} t^2 z^2_1 - \frac{2}{3} z^2_2)\\
&p_6(t) =  (-\frac{1}{8} x^1_2 + \frac{61}{800} t^3 x^1_3 + x^2_1 - \frac{1}{8} x^2_2) \otimes (5 y^3_1 + t^5 y^1_2) \otimes (\frac{5}{2} t^2 z^1_1 + t^2 z^2_1 + z^2_2)\\
&p_7(t) =   (x^1_2 + \frac{3}{100} t^3 x^1_3 + t^2 x^1_4 - x^2_2 - t^2 x^2_4) \otimes (-\frac{7}{16} t^4 y^2_2 + \frac{1}{3} y^4_1) \otimes (3 t^2 z^1_1 - z^1_2)\\
&p_8(t) =   (x^1_2 + \frac{19}{300} t^3 x^1_3 - x^2_2) \otimes (\frac{1}{3} x^3 y^2_1 + \frac{5}{4} y^3_1 - \frac{1}{3} t y^4_1) \otimes (-\frac{5}{2} t z^2_1  + z^2_2)\\
&p_9(t) =   (x^1_2 - \frac{29}{100} t^3 x^1_3 + t^2 x^1_4 + x^2_2 + t^2 x^2_4) \otimes (\frac{1}{3} y^4_1 + t^2 y^4_2) \otimes (z^1_2)\\
&p_{10}(t) =  (-\frac{5}{16} x^1_2 - 5 x^2_1 + \frac{5}{8} x^2_2) \otimes y^3_1 \otimes (\frac{5}{2} t^2 z^1_1 - 5 t^2 z^2_1 + z^2_2)\\
&p_{11}(t) =   (-x^2_1 + \frac{1}{30} t^3 x^2_3) \otimes (-t^3 y^1_1 + 30 y^3_1) \otimes  z^2_1.
\end{align*}
Then
$$
T_{BCLRS,4}=\frac 1{t^5}
[p_3(t)+p_5(t)+p_6(t)+p_{10}(t)+t(p_1(t)+p_2(t)+p_4(t)+p_{7}(t)+p_{9}(t))+t^2p_{11}(t)].
$$

The limit points are  (ignoring scales which are irrelevant for the geometry):
\begin{align*}
p_1= & (x^1_2 + x^2_2    )        \ot   y^4_1                \ot  (z^1_2 - z^2_2 )        \\
p_2= & (x^1_2 + \frac{1}{8} x^2_2 + \frac{1}{8} x^2_4 )    \ot    y^4_1               \ot  (\frac{1}{8} z^1_2 - z^2_2) \\
p_3= & x^1_2 + x^2_2       \ot    y^3_1               \ot  z^2_2      \\
p_4= & -x^1_2 + x^2_2              \ot   y^4_1                  \ot  (z^1_2 + z^2_2)         \\
p_5= & x^1_2                \ot    y^3_1       \ot    z^2_2 \\
p_6= & (-  x^1_2 + 8x^2_1 -   x^2_2) \ot       y^3_1     \ot   z^2_2 \\
p_7= & (x^1_2 - x^2_2 )       \ot  y^4_1     \ot    z^1_2    \\
p_8= & (x^1_2 - x^2_2)          \ot y^3_1     \ot   z^2_2     \\
p_9= & (x^1_2 + x^2_2 )         \ot  y^4_1     \ot   z^1_2     \\
p_{10}= & (-  x^1_2 - 16x^2_1 + 2 x^2_2) \ot   y^3_1 \ot   z^2_2 \\
p_{11}= &   x^{2}_1     \ot   y^3_1     \ot   z^2_1 . 
\end{align*}
Here $p_3,p_5,p_6,p_{10}$ (the \lq\lq honest\rq\rq\ limit points)  lie on a $\pp 2\times \pp 0\times \pp 0$, 
namely $\BP(\langle x^1_2,x^2_2,x^2_1\rangle\ot y^3_1\ot z^2_2)$, a much
simpler limit configuration than previously. The point $x^2_2$ shows up at zero-th order, whereas in the previous algorithms
only vectors tangent to $x^1_1$ in $Seg(\BP U^*\otimes \BP V)$ showed up at zero-th order. The high order of the algorithm makes its geometry
difficult to analyze.

\section{The Alekseev-Smirnov  border rank algorithm for $M_{\langle 4,2,2\rangle}$}
While this algorithm does not split the matrix multiplication tensor into the sum of two tensors and two algorithms,
it still has features of the other algorithms.

We  rearrange the points and flip the super/subscript of $z$ (the  ordering
in \cite{AlSmir} was 1,2,3,4,5,10,13,6,11,7,9,8,12). We also  modified the derivatives of $p_{12}$ and $p_{13}$,  and the second derivative of $p_{4}$.

\begin{align*}
&p_1(t) = (-t x^1_2 + x^2_2 + x^4_2) \otimes (-y^1_1 + y^2_2) \otimes (- z^1_1+z^1_3 - tz^1_4+t^2 z^2_2)\\
&p_2(t) = (t x^1_2 - x^2_2) \otimes (-t y^2_1 + y^2_2) \otimes (-z^1_1 + z^1_3 - t z^1_4 + tz^2_1)\\
&p_3(t) = (t^2 x^1_1 + tx^1_2 - t x^2_1 - x^2_2 - x^4_2) \otimes  y^1_1  \otimes (z^1_1 - t^2 z^2_2)\\
&p_4(t) = (t^2 x^3_1 -tx^4_1 - tx^1_2 + x^2_2 + x^4_2) \otimes (-y^1_1 + t y^2_1) \otimes (-z^1_3 + t z^1_4)\\
&p_5(t) =  x^4_2  \otimes  y^2_2  \otimes (z^1_1 -  z^1_3 + t z^1_4 - t^2 z^2_2 - t z^2_3 + t^2 z^2_4)\\
& \\
&p_{6}(t) = (t^2 x^1_1 + x^2_2) \otimes (y^1_2 - t^2 y^2_1 + t y^2_2) \otimes  z^1_2 \\
&p_{7}(t) =  x^2_2  \otimes  y^1_2  \otimes (-z^1_1 - t z^2_1 - z^1_2)\\
& \\
&p_8(t) =  x^2_1 \otimes (-ty^1_1 + t y^2_1 + y^1_2) \otimes (-z^1_1 - t z^1_2 + t^2 z^2_2)\\
&p_{9}(t) = (x^2_1 + x^2_2) \otimes (y^1_2 + t y^2_1) \otimes (z^1_1 + t z^2_1)\\
& \\
&p_{10}(t) = (t^2 x^3_1 + t x^3_2 + x^4_2) \otimes (y^1_2 - t y^2_1) \otimes (-z^1_3 + z^2_3)\\
&p_{11}(t) = (t x^3_2 + x^4_2) \otimes (-y^1_2 + t y^2_1 + t y^2_2) \otimes  z^2_3 \\
& \\
&p_{12}(t) = (t^2 x^3_1 + x^4_1) \otimes (ty^1_1 + y^1_2 - t^2 y^2_1) \otimes (z^1_3 + t^2 z^2_4)\\
&p_{13}(t) = (tx^3_2 + x^4_2 - x^4_1) \otimes  y^1_2  \otimes   z^3_1. 
\end{align*}

Then $M_{\langle 2, 2, 4\rangle} = \lim_{t \to 0} \frac{1}{t^{ 2}} \sum p_i(t)$.

The limiting points are (ignoring signs irrelevant for geometry):
\begin{align*}
p_1= & (x^2_2 + x^4_2) \ot  ( y^1_1 - y^2_2)  \ot   (  z^1_1-z^1_3)\\
p_2= &   x^2_2  \ot   y^2_2  \ot   ( z^1_1 - z^1_3) \\
p_3= &   (x^2_2 + x^4_2 ) \ot   y^1_1  \ot   z^1_1 \\
p_4= & (x^2_2 + x^4_2 ) \ot    y^1_1  \ot  z^1_3 \\
p_5= & x^4_2  \ot   y^2_2  \ot  ( z^1_1 -  z^1_3 )\\
& \\
p_{6}= & x^2_2  \ot   y^1_2  \ot   z^2_1 \\
p_{7}= & x^2_2  \ot   y^1_2  \ot  (z^1_1+ z^2_1) \\
p_8= & x^2_1  \ot   y^1_2  \ot  z^1_1 \\
p_{9}= & x^2_1 + x^2_2  \ot   y^1_2  \ot   z^1_1 \\
& \\
p_{10}= & x^4_2  \ot   y^1_2  \ot   (z^1_3 - z^2_3) \\
p_{11}= & x^4_2  \ot  y^1_2  \ot  z^2_3 \\
p_{12}= & x^4_1  \ot   y^1_2  \ot   z^1_3 \\
p_{13}= & (x^4_2 - x^4_1 )  \ot     y^1_2  \ot   z^1_3 .\end{align*}

The terms are grouped as above because there are three independent failures of linear independence:
First $\langle p_1\hd p_5\rangle \cap Seg(\BP A\times \BP B\times \BP C)$ form a BCLR-type configuration of 
special  $(\g,\mu)$ and $(\a,\mu^*)$ lines plus a $\b$-line with rank one $A,C$ elements that intersects 
the special lines, namely
\begin{align*}
L_{(\g,\mu)}= & (x^2_2 + x^4_2) \otimes y^1_1 \otimes (w^1 \otimes \langle u_1, u_3 \rangle)  \\
L_{(\a,\mu^*)}= & (\langle u^2, u^4 \rangle \otimes v_2) \otimes y^2_2 \otimes (z^1_1 -  z^1_3)   \\
L_{\b}= & (x^2_2 + x^4_2)  \otimes \langle y^1_1, y^2_2 \rangle \otimes (z^1_1 -  z^1_3)  .
\end{align*}
In this configuration, the space $B$ plays the role of $A$ in the earlier expressions. 
Then there are two pairs of lines that intersect in a point causing linear dependence (subscript indicates type).
They are $\langle p_6\hd p_9\rangle$
and $\langle p_{10}\hd p_{13}\rangle$,  which are each  contained in a $\pp 2$ spanned by two intersecting lines on the Segre.
\begin{align*}
S_{(\g,\o^*)}= & x^2_2 \otimes y^1_2 \otimes (W^* \otimes u_1)   \\
S_{(\a,\nu)}= & (u^2 \otimes V) \otimes y^1_2 \otimes z^1_1  
\end{align*}
and
\begin{align*}
T_{(\g,\o^*)}= & x^4_2 \otimes y^1_2 \otimes (W^* \otimes u_3)  \\
T_{(\a,\nu)}= & (u^4 \otimes V) \otimes y^1_2 \otimes z^1_3    .
\end{align*}

We use the same notation as above in describing the first and second derivatives.
The first derivatives correspond to:
$$\begin{array}{cccc}
p'_1  & -x^1_2 &  & -z^1_4\\
p'_2  &  x^1_2 & -y^2_1 & -z^1_4 + z^2_1 \\
p'_3  & x^1_2 - x^2_1 & &  \\
p'_4  & -x^4_1 - x^1_2 & y^2_1 & z^1_4 \\
p'_5  &  &  & z^1_4 - z^2_3 \\
p'_{6}  &  & y^2_2 &  \\
p'_{7}  &  &  & -z^1_2 \\
p'_8  &  & -y^1_1 + y^2_1 & -z^1_2 \\
p'_{9}  &  & y^2_1 & z^1_2 \\
p'_{10}  & x^3_2 & -y^2_1 & \\
p'_{11}  & x^3_2 & y^2_1 + y^2_2 &  \\
p'_{12}  &  & y^1_1 &  \\
p'_{13}  & x^3_2 &  & . \\
\end{array}
$$

The second derivatives corresponding to new tangent vectors come from:

$$\begin{array}{cccc}
p''_1  &  &  & z^2_2\\
p''_2  &  &  & \\
p''_3  & x^1_1 & & -z^2_2  \\
p''_4  & x^3_1 &  &  \\
p''_5  &  &  & -z^2_2+z^2_4 \\
p''_{6}  & x^1_1 & -y^2_1 &  \\
p''_{7}  &  &  & \\
p''_8  &  &  & z^2_2 \\
p''_{9}  &  &  &  \\
p''_{10}  & x^3_1 &  & \\
p''_{11}  &  &  &  \\
p''_{12}  & x^3_1 & -y^2_1 & z^2_4  \\
p''_{13}  &  &  & . \\
\end{array}
$$

Note that $p_3, p_4$ lie on $L_{(\g,\mu)}$ and $p_2,p_5$ lie on $L_{(\a,\mu^*)}$ and $p_1$ is chosen exactly so it cancels $p_2 + \ldots + p_5$,
just as with $T_{BCLR}$. 
Note further that  $p_{6}, p_{7} \in S_{(\g,\o^*)}$,  $p_8, p_{9} \in S_{(\a,\nu)}$, 
 $p_{10},p_{11} \in T_{(\g,\o^*)}$, and $p_{12}, p_{13} \in T_{(\a,\nu)}$. 

\begin{remark}    The permutation
  $U\ra U$ (and its induced action $U^*\ra U^*$) exchanging $u_1\leftrightarrow u_3$ and
  $u_2\leftrightarrow u_4$ 
  preserves $M_{\langle 4,2,2\rangle}$. In the algorithm it  switches the role of the  $S$'s and $T$'s and fixes the $L$'s.
\end{remark}

\medskip

The first order derivatives are as follows:

$p_1, \ldots, p_5$ from the $\BP^1 \times \BP^1 \times \BP^1$ contribute the following terms: 
$$\begin{array}{cc}
p_1 & -x^1_2 \otimes (-y^1_1 + y^2_2) \otimes (-z^1_1 + z^1_3) - (x^2_2 + x^4_2) \otimes (-y^1_1 + y^2_2) \otimes  z^1_4 \\
p_2 & x^1_2 \otimes y^2_2 \otimes (-z^1_1 + z^1_3) +  x^2_2 \otimes  y^2_1  \otimes (-z^1_1 + z^1_3)  -x^2_2  \otimes  y^2_2  \otimes (-z^1_4 + z^2_1)\\
p_3 & (x^1_2 - x^2_1) \otimes y^1_1 \otimes z^1_1 \\
p_4 & -( x^4_1 + x^1_2) \otimes  y^1_1 \otimes  z^1_3 - (x^2_2 + x^4_2) \otimes y^2_1 \otimes  z^1_3 - (x^2_2 + x^4_2) \otimes  y^1_1  \otimes z^1_4 \\
p_5 & x^4_2  \otimes y^2_2 \otimes (z^1_4 - z^2_3) \\
\end{array}$$

These sum to $-x^2_2\otimes y^2_1\otimes  z^1_1 - x^2_2 \otimes y^2_2 \otimes z^2_1 - x^2_1 \otimes  y^1_1 \otimes z^1_1 - x^4_1 \otimes y^1_1\otimes z^1_3
- x^4_2 \otimes y^2_1 \otimes  z^1_3 - x^4_2 \otimes y^2_2 \otimes  z^2_3$.

The points from $S_{(\g,\o^*)}$ and $T_{(\g,\o^*)}$ contribute: 
$$\begin{array}{cc}
p_6 & x^2_2 \otimes y^2_2 \otimes z^2_1 \\
p_7 & -x^2_2 \otimes y^1_2 \otimes  z^1_2  \\
p_8 & -x^2_1 \otimes (-y^1_1 + y^2_1) \otimes  z^1_1  -  x^2_1  \otimes y^1_2 \otimes  z^1_2  \\
p_9 & (x^2_1 + x^2_2) \otimes y^2_1 \otimes z^1_1 + (x^2_1 + x^2_2) \otimes y^1_2 \otimes z^1_2. \\
\end{array}$$
These sum to $x^2_2\otimes-y^2_1\otimes -z^1_1 + x^2_2 \otimes y^2_2 \otimes z^2_1 + x^2_1 \otimes  y^1_1 \otimes z^1_1$.

$T_{(\g,\o^*)}$ and $T_{(\a,\nu)}$ contribute: 
$$\begin{array}{cc}
p_{10} & x^3_2 \otimes y^1_2 \otimes (-z^1_3 + z^2_3) - x^4_2 \otimes  y^2_1  \otimes (-z^1_3 + z^2_3) \\
p_{11} & -x^3_2 \otimes  y^1_2 \otimes z^2_3 + x^4_2 \otimes (y^2_1 + y^2_2) \otimes z^2_3 \\
p_{12} & x^4_1 \otimes y^1_1 \otimes z^1_3 \\
p_{13} & x^3_2 \otimes y^1_2 \otimes z^1_3.\\
\end{array}$$
These sum to $x^4_1 \otimes y^1_1 \otimes z^1_3 + x^4_2 \otimes y^2_1 \otimes z^1_3 + x^4_2 \otimes y^2_2 \otimes z^2_3$.

\medskip

The second order terms are as follows:

Terms from $L_1, L_2$ in the second fundamental form are: 
$$\begin{array}{cc}
p_1 &  x^1_2  \otimes (-y^1_1 + y^2_2) \otimes  z^1_4  \\
p_2 &  -x^1_2  \otimes  y^2_1  \otimes (-z^1_1 + z^1_3) +  x^1_2  \otimes y^2_2 \otimes (-z^1_4 + z^2_1)  +x^2_2  \otimes  y^2_1  \otimes (-z^1_4 + z^2_1) \\
p_3 & \\
p_4 &  ( x^4_1 + x^1_2) \otimes y^2_1 \otimes  z^1_3  + ( x^4_1 + x^1_2) \otimes  y^1_1  \otimes z^1_4  + (x^2_2 + x^4_2) \otimes y^2_1 \otimes z^1_4 \\
p_5 & .
\end{array}$$

These sum to $x^1_2 \otimes y^2_2 \otimes z^2_1 +  x^1_2 \otimes y^2_1 \otimes z^1_1 + x^2_2 \otimes y^2_1 \otimes z^2_1 + x^4_1 
\otimes y^2_1 \otimes z^1_3 + x^4_1 \otimes y^1_1 \otimes z^1_4 + x^4_2 \otimes y^2_1 \otimes z^1_4$.

The $t^2$ terms from tangent vectors to  $L_1$ and $L_2$ are:  
$$\begin{array}{cc}
p_1 &(x^2_2 + x^4_2) \otimes (-y^1_1 + y^2_2) \otimes z^2_2\\
p_2 &\\
p_3 & x^1_1 \otimes y^1_1 \otimes z^1_1 + ( x^2_2+ x^4_2) \otimes y^1_1 \otimes  z^2_2  \\
p_4 & x^3_1 \otimes y^1_1 \otimes z^1_3 \\
p_5 & x^4_2 \otimes y^2_2 \otimes (-z^2_2 + z^2_4) \\
\end{array}$$

These sum to $x^1_1 \otimes y^1_1 \otimes z^1_1 + x^2_2 \otimes y^2_2 \otimes z^2_2 + x^4_2 \otimes y^2_2 \otimes z^2_4 + x^3_1 \otimes y^1_1 \otimes z^1_3$.

Terms from $S_{(\g,\o^*)}, S_{(\a,\nu)}$ in the second fundamental form are: 
$$\begin{array}{cc}
p_6 & \\
p_7 & \\
p_8 & -x^2_1 \otimes (-y^1_1 + y^2_1) \otimes  z^1_2  \\
p_9 & (x^2_1 + x^2_2) \otimes  y^2_1  \otimes z^1_2 \\
\end{array}$$

These sum to $x^2_1 \otimes y^1_1 \otimes z^1_2 + x^2_2 \otimes y^2_1 \otimes z^1_2$.

The $S_{(\g,\o^*)}, S_{(\a,\nu)}$   $t^2$ terms from tangent spaces are: 
$$\begin{array}{cc}
p_6 & x^1_1 \otimes y^1_2 \otimes z^2_1 - x^2_2 \otimes  y^2_1  \otimes z^2_1 \\
p_7 & \\
p_8 & x^2_1 \otimes y^1_2 \otimes z^2_2 \\
p_9 & \\
\end{array}$$

These sum to $x^2_1 \otimes y^1_2 \otimes z^2_2 - x^2_2 \otimes y^2_1 \otimes z^2_1 + x^1_1 \otimes y^1_2 \otimes z^2_1$.

Terms from $T_{(\g,\o^*)}$ and $T_{(\a,\nu)}$ in the second fundamental form are: 
$$\begin{array}{cc}
p_{10} & -x^3_2 \otimes  y^2_1  \otimes (-z^1_3 + z^2_3) \\
p_{11} & x^3_2 \otimes (y^2_1 + y^2_2) \otimes z^2_3 \\
p_{12} &  \\
p_{13} & \\
\end{array}$$
These sum to $x^3_2 \otimes y^2_1 \otimes z^1_3 + x^3_2 \otimes y^2_2 \otimes z^2_3$.

The $T_{(\g,\o^*)}$ and $T_{(\a,\nu)}$   $t^2$ terms from tangent spaces are:  
$$\begin{array}{cc}
p_{10} & x^3_1 \otimes y^1_2 \otimes (-z^1_3 + z^2_3) \\
p_{11} & \\
p_{12} & x^3_1 \otimes y^1_2 \otimes z^1_3 - x^4_1 \otimes  y^2_1  \otimes z^1_3 + x^4_1 \otimes y^1_2 \otimes z^2_4 \\
p_{13} & \\
\end{array}$$
These sum to $x^3_1 \otimes y^1_2 \otimes z^2_3 - x^4_1 \otimes y^2_1 \otimes z^1_3 + x^4_1 \otimes y^1_2 \otimes z^2_4$.

\section{Brief remarks on Smirnov's border rank $20$ algorithm for
$M_{\langle 3,3,3\rangle}$}\label{m333}

The \cite[Table 6]{MR3146566} border rank algorithm for $M_{\langle 3,3,3\rangle}$ expressed as a tensor is: 
\begin{tiny}
\begin{align*}
&p_1(t) = t^{-6} (t^3 x^1_1 - t^6 x^1_3 + t x^3_1 +  x^3_3) \otimes (t^2 y^1_1  + t^3 y^1_2 + t y^2_1 +  y^3_1 + 2 t^5 y^3_3) \otimes (z^2_1 - t^{4} z^1_2 + t^6 z^1_3)\\
&p_2(t) = t^{-5} (t^5 x^1_3 +  x^2_1 - t x^2_2 - t^3 x^3_3) \otimes (t^4 y^2_3 + y^3_3) \otimes (-t^3 z^1_1 +  z^3_1 - t^3 z^1_2 + t^2 z^2_2 + t^3 z^3_2 - t^2 z^3_3)\\
&p_3(t) = t^{-5} (-t x^2_1 + t^2 x^2_2 - x^3_2) \otimes (y^2_1 + t y^2_2 - t^{3} y^2_3) \otimes (t^3 z^1_1 - z^3_1 + (t^2-t^6) z^1_2 + t^2 z^3_3)\\
&p_4(t) = t^{-5} (-t^2 x^1_1 + t^3 x^1_2 - x^3_1 + t x^3_2) \otimes (t^2 y^1_1 + t y^2_1 + y^3_1) \otimes (-t z^1_1 +  z^2_1 + t^6 z^1_3)\\
&p_5(t) = t^{-6} (-t^5 x^1_3 + x^2_1 + x^3_3) \otimes (t^2 y^1_1 + t^3 y^1_2 + t y^2_1 + y^3_1 + t y^3_2 + t^5 y^3_3) \otimes (- z^2_1 + t^4 z^1_2)\\
&p_6(t) = t^{-6} (x^3_1 - t x^3_2) \otimes (t^4 y^1_2 + y^2_1) \otimes (t^2 z^1_1 - t^2 z^2_1 - z^3_1 - (t^4 + t^5)z^1_3 + t^4 z^2_3 + t^2 z^3_3)\\
&p_7(t) = t^{-4} (t^3 x^1_1 - 2t^4 x^1_2 + x^3_1 - t x^3_2) \otimes (t^2 y^1_1 + t^4 y^1_3 + y^2_1) \otimes (- z^1_1)\\
&p_8(t) = t^{-6} (t^3 x^1_1 - t^5 x^1_3 -  x^2_1 + t^2 x^2_3 + t x^3_1) \otimes (t^3 y^1_3 + y^3_1) \otimes ( z^3_1 + (t^4 - t^3)z^1_2)\\
&p_9(t) = t^{-5} (- x^2_1 + t x^2_2) \otimes (t^2 y^2_2 + y^3_3) \otimes (-t^3 z^1_1 +  z^3_1 - t^3 z^1_2 + (t^2-t^5)z^2_2 - t^2 z^3_3)\\
&p_{10}(t) = t^{-5} (x^3_2) \otimes (y^2_1 + t^4 y^1_3 + t y^2_2) \otimes (t^3 z^1_1 - t^2 z^2_1 - z^3_1 + t^3 z^1_2 + t^4 z^2_3 + t^2 z^3_3)\\
&p_{11}(t) =  t^{-4} (x^2_1 - t^2 x^2_3) \otimes (t y^1_3 + y^2_3 + y^3_2) \otimes (-t^3 z^1_2 +  z^3_1 + t^3 z^3_2)\\
&p_{12}(t) = t^{-6} (- x^2_1 + t^4 x^1_2 + t^2 x^2_3 + t^2 x^3_2) \otimes (-t^2 y^2_3 + y^3_1) \otimes (t^2 z^1_2 - z^3_1)\\
&p_{13}(t) = t^{-6} (-t^2 x^2_1 + t^3 x^2_2 +  x^3_1 - t x^3_2) \otimes y^2_1 \otimes (-t^3 z^1_1 + z^3_1 + t^6 z^1_2 - t^2 z^3_3)\\
&p_{14}(t) = t^{-5} (-t^2 x^1_1 + t^3 x^1_2 + t^4 x^1_3 - x^3_1 + t x^3_2) \otimes (y^3_1) \otimes (t z^1_1 + z^3_1 - t^3 z^1_2)\\
&p_{15}(t) = t^{-6} (x^3_3) \otimes (y^3_2 - t^{4} y^3_3) \otimes (t z^2_1 -  z^3_1 - t^5 z^1_2 - t^2 z^2_2 - t^3 z^3_2 + t^6 z^2_3)\\
&p_{16}(t) = t^{-6} (-t^4 x^1_2 + x^2_1 - t^2 x^3_2) \otimes (t^2 y^1_1 + t y^2_1 - t^2 y^2_2 + y^3_2) \otimes (z^2_1)\\
&p_{17}(t) = t^{-4} (x^3_1) \otimes (-t^2 y^1_2 + t^4 y^1_3 + y^2_1 - 2 t^4 y^3_3) \otimes ( z^2_1 + t^3 z^1_3 - t^2 z^2_3)\\
&p_{18}(t) = t^{-2} (- x^2_1 + t x^2_2 + t^2 x^2_3) \otimes (t y^2_3 + y^3_3) \otimes (t^2 z^2_2 + z^3_2)\\
&p_{19}(t) = t^{-6} (-t^2 x^2_1 + t^4 x^2_3 + x^3_3) \otimes (y^3_2) \otimes (z^3_1 + t^2 z^2_2 + t^3 z^3_2)\\
&p_{20}(t) = t^{-5} (x^2_1) \otimes (t^2 y^1_2 + t y^2_2 + y^3_2 + t^4 y^3_3) \otimes ( z^2_1 + t^3 z^2_2).
\end{align*}
\end{tiny}

The limit points are 
$$\begin{array}{c|c|c|c}
 p_1 & x^3_3 & y^3_1 & z^2_1  \\
 p_2=p_9 & x^2_1 & y^3_3 & z^3_1 \\
 p_3=p_{10} & -x^3_2 & y^2_1 & -z^3_1 \\
 p_4 & -x^3_1 & y^3_1 & z^2_1 \\
 p_5 & x^2_1 + x^3_3 & y^3_1 & -z^2_1 \\
 p_6 & x^3_1 & y^2_1 & -z^3_1 \\
 p_7 & x^3_1 & y^2_1 & -z^1_1 \\
 p_8& -x^2_1 & y^3_1 & z^3_1 \\
 p_{11} & x^2_1 & y^2_3 + y^3_2 & z^3_1 \\
 p_{12} & -x^2_1 & y^3_2 & -z^3_1 \\
 p_{13} & x^3_1 & y^2_1 & z^3_1 \\
 p_{14} & -x^3_1 & y^3_1 & z^3_1 \\
 p_{15}=p_{19} & x^3_3 & y^3_2 & -z^3_1 \\
 p_{16}=p_{20} & x^2_1 & y^3_2 & z^2_1 \\
 p_{17} & x^3_1 & y^2_1 & z^2_1 \\
 p_{18} & -x^2_1 & y^3_3 & z^3_2 \\
\end{array}$$

In addition to the duplication of points, the limit points are in a very degenerate configuration.
For example all the $z$ points lie on a $\pp 2$ of type $\o^*$, all the $x$ points lie on a $\pp 3$ and all the $y$
points on a $\pp 4$.

\section{Remarks on the uniqueness of the 
BCLR  border rank algorithms}\label{uniquerems}
Let $T=T_1+\cdots + T_r$ be a 
  rank $r$ expression for a tensor  $T\in A\ot B\ot C$.    The tensor $T$ is said to be
{\it identifiable} if the $[T_j]$ are unique, i.e., $T$ is
not in the span of any other collection of $r$ rank one tensors (up to scale). 
For border rank algorithms it will be more useful to define a weaker notion
of identifiability: we will say $T$ is {\it Grassmann identifiable} if
$\langle T_1\hd T_r\rangle$ is the unique $r$-plane spanned by rank
one tensors that contains $T$, and {\it Grassmann border identifiable} if there exists 
a unique $E\in G(r,A\ot B\ot C)$ that is the limit of some $\langle T_1(t)\hd T_r(t)\rangle$ with
$T_j(t)$ of rank one. 

Tensors with symmetry are rarely Grassmann border identifiable because their symmetry
group acts on the Grassmannian and will move the algorithm to other algorithms.
In what follows we discuss the action of the symmetry group on 
the limiting $5$-plane $E^{BCLR}$  for the BCLR tensor,  
 and the limiting $10$-plane for the sum of two BCLR-type tensors
glued together to form a border rank algorithm for $M_{\langle 3,2,2\rangle}$, which we denote by 
$\tilde E^{BCLR}$. 
 
 We expect the following information to be useful in constructing new
 algorithms for $M_{\langle n,2,2\rangle}$.
 
The role $x^1_1$  in a BCLR-type algorithm can be played by any element of
$Seg(\pp 1\times\pp{n-1})$, so the glued together algorithms come
in families parametrized by this Segre variety. 

The choice of an element to blank out determines a split $U=\BC^1\op \BC^{n-1}$ and $V=\BC^1\ot \BC^1$. The subgroup
preserving such a splitting is $G_U\times T_V\times SL(W)$, where $T_V\subset SL(V)$ denotes the diagonal matrices and 
$$
G_U=\begin{pmatrix} * &  & \\
& * & * \\
& * & *
\end{pmatrix} \cap SL(U)
$$
where the blocking is $(1,n-1)\times (1,n-1)$. 

A border rank algorithm for $M_{\langle n,2,2\rangle}$ obtained from two BCLRS-type algorithms has a further
splitting $\BC^{n-1}=\BC^{m-1}\op \BC^{n-m}$.

The following was shown via a computer calculation of the Lie algebra of the stabilizers by F. Gesmundo:

\begin{proposition}
$\tilde E^{BCLR}\in G(10,A\ot B\ot C)$ has a $10$ dimensional orbit under $SL(U)\times SL(V) \times SL(W)$.
The connected component of the identity of its stabilizer is $T_U\times T_V\times T_W$.

$  E^{BCLR}\in G(5,A\ot B\ot C)$ has a $2$-dimensional orbit under $G_U\times T_V\times SL(W)$.
The connected component of the identity of its stabilizer is $T_U\times T_V\times T_W$.
\end{proposition}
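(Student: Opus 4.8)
The plan is to reduce the statement to a finite linear-algebra computation with Lie algebras, which is the substance of Gesmundo's machine calculation; I describe how I would set it up and what output it should produce. First I would fix explicit models of the two planes. For $E^{BCLR}$, take as basis $T_{BCLR}$ together with a basis of $\hat L_{12,(\b,\o)}=x^1_2\ot(v^2\ot W)\ot z^1_2$ and a basis of $\hat L_{21,(\g,\o^*)}=x^2_1\ot y^2_2\ot(W^*\ot u_2)$; by the theorem of \S\ref{tbcrl} this spans the $5$-plane ($\dim=1+2+2$). For $\tilde E^{BCLR}$, glue two BCLRS-type $5$-planes along the pattern forced by Proposition \ref{twoaddprop} and the geometry of \S\ref{tbcrl}; here one must check that the two sub-$5$-planes meet exactly in the common copy of $M_{\langle 2,2,2\rangle}$ and that their span is genuinely the $h$-jet limit of a curve of rank-one-spanned $10$-planes, so that it is the object whose stabilizer we are computing.

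Next I would parametrize the acting Lie algebra $\mathfrak g$ by generic matrix entries: $\mathfrak g=\mathfrak{sl}(U)\op\mathfrak{sl}(V)\op\mathfrak{sl}(W)$, of dimension $8+3+3=14$, in the $\tilde E^{BCLR}$ case, and $\mathfrak g=\mathfrak g_U\op\mathfrak t_V\op\mathfrak{sl}(W)$, of dimension $1+1+3=5$, in the $E^{BCLR}$ case, where $\mathfrak g_U$ is the Lie algebra of $G_U$. An element $\xi\in\mathfrak g$ acts on $A\ot B\ot C$ as a derivation via Leibniz, and $\xi$ stabilizes $E$ iff $\xi\cdot e_i\in E$ for every basis vector $e_i$, equivalently iff the image of each $\xi\cdot e_i$ vanishes in the quotient $(A\ot B\ot C)/E$. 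This is a homogeneous linear system in the entries of $\xi$; solving it by elimination should give solution space exactly $\mathfrak t_U\op\mathfrak t_V\op\mathfrak t_W$, of dimension $2+1+1=4$ (resp.\ $1+1+1=3$).

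Then I would identify the torus and conclude. Since each summand of $M_{\langle n,2,2\rangle}$, and each summand of the BCLRS correction term $x^1_1\ot(y^1_1\ot z^1_1+y^1_2\ot z^2_1)$, is a weight-zero vector for the maximal torus, both $T_{BCLR}$ and $M_{\langle 3,2,2\rangle}$ are torus-fixed, and the limiting lines $\hat L$ are spanned by weight vectors, hence torus-stable; so the maximal torus visibly lies in the stabilizer, which confirms that the solution space found above is the entire stabilizer Lie algebra. Therefore the identity component of the stabilizer is $T_U\times T_V\times T_W$, and by orbit–stabilizer the orbit has dimension $14-4=10$ (resp.\ $5-3=2$). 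Note that only the Lie algebra is computed, so the claim is correctly stated for the identity component only; the component group of the stabilizer is not addressed.

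The main obstacle is organizational rather than conceptual: the ambient space $A\ot B\ot C$ has dimension $6\cdot4\cdot6=144$ for $M_{\langle 3,2,2\rangle}$, so the linear system, though routine, is large and must be generated and row-reduced mechanically; and one must be careful that $\tilde E^{BCLR}$ is taken to be the genuine limiting $10$-plane and not merely some $10$-plane through the tensor that happens to contain the two sub-configurations, since a different such choice could have a different stabilizer.
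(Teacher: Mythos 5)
Your proposal is essentially the paper's own approach: the paper offers no argument beyond citing F.~Gesmundo's computer calculation of the Lie algebras of the stabilizers, and your setup --- imposing $\xi\cdot E\subset E$ as a linear system on $\mathfrak{sl}(U)\oplus\mathfrak{sl}(V)\oplus\mathfrak{sl}(W)$ (resp. $\mathfrak{g}_U\oplus\mathfrak{t}_V\oplus\mathfrak{sl}(W)$), observing that the torus lies in the stabilizer because the planes are spanned by weight vectors, and then applying orbit--stabilizer to get $14-4=10$ and $5-3=2$ --- is exactly that computation. One small correction to your description of $\tilde E^{BCLR}$: since it is a $10$-plane, the two glued $5$-planes must intersect only in the origin rather than in a common line through a copy of $M_{\langle 2,2,2\rangle}$.
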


\bibliographystyle{amsplain}
 
\bibliography{Lmatrix}

\end{document}